\numberwithin{equation}{section}
\newcommand{\Z}{{\mathbb Z}}
\newcommand{\C}{{\mathbb C}}
\newcommand{\h}{{\mathfrak h}}
\newcommand{\wh}{{\widehat{\mathfrak h}}}
\newcommand{\wnu}{{\widehat{\nu}}}
\newcommand{\al}{\alpha}
\newcommand{\la}{\langle}
\newcommand{\ra}{\rangle}
\DeclareMathOperator{\Aut}{Aut}
\newtheorem{thm}{Theorem}[section]
\newtheorem{prop}[thm]{Proposition}
\newtheorem{lem}[thm]{Lemma}
\theoremstyle{definition}
\newtheorem{notation}[thm]{Notation}
\theoremstyle{remark}
\newtheorem{rmk}[thm]{Remark}
\newcommand{\vir}{\mathrm{Vir}}
\renewcommand{\hom}{\mathrm{Hom}}
\begin{document}

\title[$\Z_p$-orbifold constructions]
{A remark on $\Z_p$-orbifold constructions of the Moonshine vertex operator algebra}

\author[T. Abe]{Toshiyuki Abe}
\address{Faculty of Education, 
Ehime University, Matsuyama, Ehime 790-8577, Japan}
\email{abe.toshiyuki.mz@ehime-u.ac.jp}

\author[C.H. Lam]{Ching Hung Lam}
\address{Institute of Mathematics, Academia Sinica, Taipei 115, Taiwan}
\email{chlam@math.sinica.edu.tw}

\author[H. Yamada]{Hiromichi Yamada}
\address{Department of Mathematics, Hitotsubashi University, Kunitachi,
Tokyo 186-8601, Japan, 
Institute of Mathematics, Academia Sinica, Taipei 115, Taiwan}
\email{yamada.h@r.hit-u.ac.jp}

\thanks{T.A. is partially supported by JSPS fellow 15K04823. 
C.L. is partially supported by MoST grant 104-2115-M-001-004-MY3 of Taiwan. }

\subjclass[2010]{17B69, 17B65}

\keywords{orbifold construction, Leech lattice, Moonshine vertex operator algebra}

\begin{abstract}
For $p = 3,5,7,13$, we consider a $\Z_p$-orbifold construction of 
the Moonshine vertex operator algebra $V^\natural$. 
We show that the vertex operator algebra obtained by the $\Z_p$-orbifold construction 
on the Leech lattice vertex operator algebra $V_\Lambda$ and a lift of 
a fixed-point-free isometry of order $p$ is isomorphic to the Moonshine vertex operator
algebra $V^\natural$. 
We also describe the relationship between those $\Z_p$-orbifold constructions 
and the $\Z_2$-orbifold construction in a uniform manner. 
In Appendix, we give a characterization of the Moonshine 
vertex operator algebra 
$V^\natural$ 
by two mutually orthogonal Ising vectors.
\end{abstract}

\maketitle

\section{Introduction}

Let $V$ be a vertex operator algebra (VOA) and $G \subset \Aut V$ a finite group 
of automorphisms of $V$. 
The fixed-point subalgebra $V^G = \{ v \in V | g v = v, g  \in G\}$ is 
called an orbifold subVOA. 
A simple current extension of an orbifold subVOA provides an effective method to
construct a new VOA from a given pair of a VOA 
and an automorphism of the VOA of finite order. 

The first example of such a simple current extension of an orbifold subVOA is the Moonshine 
VOA $V^\natural$ constructed in \cite{FLM1988}, where the VOA 
$V_\Lambda$ associated with the Leech lattice $\Lambda$ and an automorphism $\theta$ 
of order $2$ lifted from the $-1$ isometry of $\Lambda$ are considered. 
Using the construction in \cite{FLM1988}, many maximal $2$-local subgroups of 
the Monster are described as the stabilizers of some subVOAs of 
$V_\Lambda^{\la \theta \ra}$ in \cite{Sh07}. 
However, it is not easy to describe $p$-local subgroups based on this construction 
for $p \ne 2$.

For each $p = 3, 5, 7, 13$, there is a unique, up to conjugacy, 
fixed-point-free isometry of the Leech lattice $\Lambda$ of order $p$. 
In the introduction of \cite{FLM1988}, a similar construction of $V^\natural$ by using 
an automorphism $\tau$ of $V_\Lambda$ of order $p$ lifted from a fixed-point-free 
isometry of $\Lambda$ of order $p$ was conjectured (see also \cite{DMZp}). 
Such a construction of $V^\natural$ was obtained in \cite{CLS2017} for the case $p = 3$ 
(see also \cite{Miyamoto2013, TY2013}). 
Moreover, certain maximal $3$-local subgroups of the Monster are described relatively 
explicitly using the $\Z_3$-orbifold construction.   

In this paper, we consider the $\Z_p$-orbifold construction of the Moonshine VOA 
$V^\natural$ for the cases $p = 3, 5, 7, 13$. 
We show that the VOA obtained by the $\Z_p$-orbifold construction on $V_\Lambda$ 
and  a lift of a fixed-point-free isometry of $\Lambda$ of order $p$ is isomorphic to the Moonshine VOA 
$V^\natural$. 
We also describe the relationship between those $\Z_p$-orbifold constructions 
and the $\Z_2$-orbifold construction \cite{FLM1988} in a uniform manner. 

Our idea is to use an isometry of the Leech lattice $\Lambda$ of order $2p$ whose $i$th power  
is fixed-point-free on $\Lambda$ for all $1 \le i \le 2p-1$. 
Such an isometry is unique up to conjugacy. 
The isometry can be lifted to an automorphism $\sigma$ of $V_\Lambda$ 
of the same order $2p$ with $\theta = \sigma^p$ and $\tau  = \sigma^{p+1}$. 
We consider the fixed-point subVOA  
$V_\Lambda^{\la \sigma \ra} = \{ v \in V_\Lambda | \sigma v = v\}$ by $\sigma$.
It is known that $V_\Lambda^{\la \sigma \ra}$ has exactly $4p^2$ irreducible modules 
up to equivalence \cite{CM2016, Miyamoto2015, MT2004}, 
and all of them are simple current modules \cite{DRX2015, EMS2015, Moeller2016}. 
These irreducible $V_\Lambda^{\la \sigma \ra}$-modules are parametrized 
by an abelian group $D \cong \Z_{2p} \times \Z_{2p}$ as $W^\alpha$, $\alpha \in D$ so that 
the fusion products are given by $W^\alpha \boxtimes W^\beta \cong W^{\alpha + \beta}$ 
\cite{EMS2015, Moeller2016}. 

It turns out that the VOA obtained by applying a $\Z_{2p}$-orbifold construction to 
the Leech lattice VOA $V_\Lambda$ and the automorphism $\sigma$ is isomorphic to 
$V_\Lambda$, also. 
This isomorphism induces some explicit relations between those $\Z_p$-orbifold 
constructions and the $\Z_2$-orbifold construction \cite{FLM1988}.  
In fact, it follows from \cite{EMS2015} that there are four subgroups $H_r$, $1 \le r \le 4$, 
of $D$ for which a 
simple current extension 
$\widetilde{V}^{(r)} = \bigoplus_{\alpha \in H_r} W^\alpha$ 
has a structure of a simple, rational, $C_2$-cofinite, holomorphic VOA  
of CFT-type which extends the VOA structure of $V_\Lambda^{\la \sigma \ra}$.  
Among these four 
VOAs $\widetilde{V}^{(r)}$, $1 \le r\le 4$, 
two are isomorphic to the Leech lattice VOA $V_\Lambda$ and 
the other two are isomorphic to the Moonshine VOA $V^\natural$. 
As for the latter two cases, 
one is a decomposition of the $\Z_p$-orbifold construction of $V^\natural$ 
by the automorphism $\tau \in \Aut V_\Lambda$ of order $p$ into a direct sum of 
irreducible $V_\Lambda^{\la \sigma \ra}$-modules, 
and the other is a decomposition of the $\Z_2$-orbifold construction of $V^\natural$ 
by the involution $\theta$ obtained in \cite{FLM1988}  into a direct sum of 
irreducible $V_\Lambda^{\la \sigma \ra}$-modules. 

The paper is organized as follows. 
In Section \ref{sec:preliminaries}, 
we recall some results on cyclic orbifold constructions, 
irreducible twisted modules for lattice vertex operator algebras, 
and certain fixed-point-free isometries of the Leech lattice of order $2p$, 
$p = 3,5,7,13$. 
In Section \ref{sec:irred_sigma_i-twisted}, we discuss some basic properties 
of the irreducible $\sigma^i$-twisted $V_\Lambda$-modules. 
Section \ref{sec:Z_2p-orbifold_construction} is devoted to $\Z_{2p}$-orbifold 
constructions by the automorphism $\sigma$. 
In Appendix, we give a characterization of the Moonshine VOA $V^\natural$ 
by two mutually orthogonal Ising vectors.

\section{Preliminaries}\label{sec:preliminaries}
In this section, we recall some results on cyclic orbifold constructions 
\cite{EMS2015, Moeller2016}, 
irreducible twisted modules for VOAs associated with positive definite even lattices 
and isometries of finite order \cite{BK2004, DL1996, Lepowsky1985}, 
and some fixed-point-free isometries of the Leech lattice of order $2p$, 
$p = 3,5,7,13$ \cite{ATLAS}.

\subsection{Cyclic orbifold constructions}
\label{subsec:cyclic_orbifold_construction}

We follow the notation in \cite{EMS2015}. 
Let $V$ be a simple, rational, $C_2$-cofinite, holomorphic vertex operator algebra 
of CFT-type and $G = \langle g \rangle$ a cyclic group of automorphisms of $V$ 
of order $n$. 
Then there is a unique irreducible $h$-twisted $V$-module $V(h)$ 
for $h \in G$ by \cite{DLM2000}. 

For each $h \in G$, there is a projective representation $\phi_h$ of $G$ 
on the vector space $V(h)$ such that 
\begin{equation*}
\phi_h(g) Y_{V(h)}(v,z) \phi_h(g)^{-1} = Y_{V(h)}(gv,z)
\end{equation*}
for $v \in V$. 
The representation $\phi_h$ is unique up to multiplication by an $n$th root of unity. 
If $h = 1$, we have $V(h) = V$ and then assume $\phi_h(g) = g$. 
We write $\phi_i$ for $\phi_{g^i}$. 

Let $W^{(i,j)}$ be the eigenspace of $\phi_i(g)$ in $V(g^i)$ with eigenvalue 
$e^{2\pi\sqrt{-1}j/n}$, i.e., 
\begin{equation*}
W^{(i,j)} = \{ w \in V(g^i) | \phi_i(g) w = e^{2\pi\sqrt{-1}j/n} w\}.
\end{equation*}
Then $W^{(i,j)}$ is an irreducible $V^G$-module and 
\begin{equation*}
V(g^i) = \bigoplus_{j=0}^{n-1} W^{(i,j)}
\end{equation*}
is an eigenspace decomposition of $V(g^i)$ for $\phi_i(g)$. 
The indices $i$ and $j$ are considered to be modulo $n$.
The second index $j$ depends on the choice of multiplication by an $n$th root of unity 
for the representation $\phi_i$ if $i \ne 0$. 
Note that $W^{(0,0)} = V^G$. 

The irreducible $V^G$-modules $W^{(i,j)}$, $i, j \in \Z_n$, form a complete set of 
representatives of equivalence classes of irreducible $V^G$-modules 
\cite{CM2016, Miyamoto2015, MT2004}, 
and all of them are simple current modules \cite{DRX2015, EMS2015}. 

The conformal weight of $V(g^i)$, $i \in \Z_n$ plays an important role in \cite{EMS2015}.
In the special case where the conformal weight of $V(g)$ belongs to $(1/n)\Z$, 
the fusion algebra of the orbifold subVOA $V^G$ of $V$ by $G$ has particularly nice form. 
We summarize the results of \cite[Section 5]{EMS2015} 
as the following theorem for later use.

\begin{thm}\label{thm:EMS2015} 
$($\cite{EMS2015}$)$ 
Let $V$ and $G = \langle g \rangle$ be as above. 
If the conformal weight of $V(g)$ belongs to $(1/n)\Z$, then we can choose 
multiplication of $\phi_i$ by an $n$th root of unity so that 
the following conditions hold.

$(1)$ $W^{(i,j)} \boxtimes W^{(k,l)} \cong W^{(i+k, j+l)}$.

$(2)$ The conformal weight of $W^{(i,j)}$ is $q_{\Delta}((i,j)) \equiv ij/n \pmod{\Z}$.

$(3)$ The fusion algebra of $V^G$ is the group algebra of $\Z_n \times \Z_n$ 
with a quadratic form $q_{\Delta}$.

$(4)$ Let $H$ be an isotropic subgroup of $\Z_n \times \Z_n$ 
with respect to the quadratic form $q_{\Delta}$. 
Then 
\begin{equation*}
\bigoplus_{(i,j) \in H} W^{(i,j)}
\end{equation*}
admits a structure of a simple, rational, $C_2$-cofinite, 
self-contragredient VOA  
of CFT-type which extends the VOA structure of $V^G$. 
Furthermore, if $H$ is a maximal isotropic subgroup, then it is holomorphic.
\end{thm}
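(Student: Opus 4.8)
The plan is to work inside the braided (modular) tensor category of $V^G$-modules and to exploit that, by the results quoted just before the statement, every irreducible $W^{(i,j)}$ is a simple current. Consequently the fusion product turns the set of isomorphism classes of irreducibles into a finite abelian group, and I would organize the proof into three stages: first identify this fusion group as $\Z_n \times \Z_n$ with the additive law in (1); then compute the conformal weights modulo $\Z$ to extract the quadratic form $q_\Delta$ of (2)--(3); and finally invoke the theory of simple current (abelian intertwining algebra) extensions to produce the VOA in (4).

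For the group structure I would argue that the first index is additive because an intertwining operator relating twisted sectors composes so that the monodromy around $z=0$ multiplies; hence $W^{(i,j)}\boxtimes W^{(k,l)}$ is supported in the $g^{i+k}$-twisted sector, i.e.\ is one of the $W^{(i+k,m)}$. The second index is additive because the intertwining operators are $g$-covariant, so the $\phi(g)$-eigenvalue of a fusion product is the product of the two eigenvalues, adding the exponents modulo $n$. Combined with the uniqueness of $V(g^{i+k})$ from \cite{DLM2000} and the simple current property, this pins down $W^{(i,j)}\boxtimes W^{(k,l)}\cong W^{(i+k,j+l)}$ once the normalizations $\phi_i$ are fixed, giving (1) and the group-algebra part of (3).

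For the conformal weights I would use Zhu's modular invariance together with the $S$- and $T$-matrices of $V^G$. The $T$-transformation acts on the character of $W^{(i,j)}$ by the phase $e^{2\pi\sqrt{-1}(h_{(i,j)}-c/24)}$, while the $S$-transformation interchanges the twisted sectors; relating the conformal weight of $V(g^i)$ to that of $V(g)$ through these transformations and imposing the hypothesis that the conformal weight of $V(g)$ lies in $(1/n)\Z$ forces $h_{(i,j)}\equiv ij/n \pmod{\Z}$ after the permitted rescaling of the $\phi_i$. This is exactly the hyperbolic discriminant form $q_\Delta((i,j))\equiv ij/n \pmod{\Z}$, whose associated bilinear form $b((i,j),(k,l))\equiv (il+jk)/n \pmod{\Z}$ records the double braiding; this yields (2) and completes (3). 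I expect this modular-data computation, together with the bookkeeping needed to fix the $n$th-root-of-unity ambiguity in every $\phi_i$ \emph{coherently and simultaneously}, to be the main obstacle.

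For (4), given an isotropic subgroup $H$, the condition $q_\Delta|_H \equiv 0 \pmod{\Z}$ means that every $W^{(i,j)}$ with $(i,j)\in H$ has integral conformal weight, so $\bigoplus_{(i,j)\in H} W^{(i,j)}$ is $\Z$-graded. The simple current fusion $W^{(i,j)}\boxtimes W^{(k,l)}\cong W^{(i+k,j+l)}$ lets one assemble the intertwining operators into a single vertex operator, and the vanishing of $q_\Delta$ (hence of $b$) on $H$ supplies trivial braiding and a trivial associativity cocycle, so one obtains a genuine VOA extending $V^G$; rationality, $C_2$-cofiniteness, CFT-type, and self-contragredience are inherited from $V^G$ and the finiteness of the extension. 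Finally, since $q_\Delta$ is nondegenerate of hyperbolic type on $\Z_n\times\Z_n$, a maximal isotropic $H$ has order $n$ and satisfies $H^\perp=H$; the irreducible modules of the extension are then indexed by $H^\perp/H=0$, so the extension is holomorphic.
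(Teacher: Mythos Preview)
The paper does not prove this theorem at all: it is stated as a summary of results from \cite{EMS2015} (see the sentence ``We summarize the results of \cite[Section 5]{EMS2015} as the following theorem for later use''), and no argument is given in the present paper. So there is no ``paper's own proof'' to compare your proposal against; your sketch is an attempt to reconstruct the content of \cite{EMS2015}, not of this paper.

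That said, your outline has the right overall architecture---additivity of the twist index from composition of twisted intertwining operators, additivity of the second index from $g$-covariance of the fusion, the modular $T$-matrix to extract conformal weights mod $\Z$, and simple current extension theory for (4)---and you correctly flag the coherent choice of the $\phi_i$ normalizations as the delicate point. If you want to verify your sketch against the actual source, consult \cite[Section 5]{EMS2015} and \cite{Moeller2016}, where the quadratic form $q_\Delta$ and the classification of extensions by isotropic subgroups are worked out in detail; the argument for holomorphicity in the maximal case is indeed that $|H|^2=n^2$ so $H=H^\perp$, as you wrote.
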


The subgroup $\{ (i,0) | i \in \Z_n \}$ is always a maximal isotropic subgroup of 
$\Z_n \times \Z_n$ and 
\begin{equation*}
\widetilde{V}_g = \bigoplus_{i \in \Z_n} W^{(i,0)}
\end{equation*}
is a simple, rational, $C_2$-cofinite, holomorphic VOA of CFT-type 
which extends the VOA structure of $V^G$ \cite[page 21]{EMS2015}. 
We say that the VOA $\widetilde{V}_g$ is obtained by a $\Z_n$-orbifold construction 
for $V$ and $g$.

\subsection{Irreducible twisted modules for $V_L$}
\label{subsec:irred_twisted_V_L-module}

Irreducible twisted modules for a lattice VOA $V_L$ with respect to 
a lift of an isometry $\nu$ of $L$ of finite order were constructed explicitly in 
\cite{BK2004, DL1996, Lepowsky1985}. 
In this section, we recall some basic properties of those irreducible twisted modules 
in the special case where $L$ is unimodular and $\nu$ is fixed-point-free.

Let $(L, \langle \cdot , \cdot \rangle)$ be a positive definite even unimodular lattice and 
$\nu$ a fixed-point-free isometry of $L$ of finite order. 
Let $m$ be a positive integer such that $\nu^m = 1$. 
Note that $m$ is not necessarily the order of $\nu$. 
We extend the isometry $\nu$ to $\h = L \otimes_\Z \C$ linearly. 
Following \cite[(4.17)]{BK2004} and \cite[Remark 3.1]{DLM2000}, 
let
\begin{equation*}
\h^{(i; \nu)} = \{ h \in \h | \nu h = \xi_m^{-i} h \}, \quad \xi_m = e^{2\pi\sqrt{-1}/m}.
\end{equation*}

Since $\nu$ is fixed-point-free, we have $\h^{(0; \nu)} = 0$ and 
$\h = \bigoplus_{i=1}^{m-1} \h^{(i; \nu)}$. 
The $\nu$-twisted affine Lie algebra $\wh[\nu]$ is defined by 
\begin{equation*}
\wh[\nu] 
= \Big( \bigoplus_{i=1}^{m-1} \h^{(i; \nu)} \otimes t^{i/m} \C[t,t^{-1}] \Big) \oplus \C K
\end{equation*}
with commutation relations
\begin{equation*}
[x \otimes t^n, y \otimes t^{n'}] = \langle x, y\rangle n \delta_{n+n', 0}K, 
\quad [K, \wh[\nu]] = 0
\end{equation*}
for $x \in \h^{(i; \nu)}$, $y \in \h^{(i'; \nu)}$, $n \in i/m + \Z$, $n' \in i'/m + \Z$. 
We write $x(n)$ for $x \otimes t^n$. 

The index $i$ of $\h^{(i; \nu)}$ can be considered to be modulo $m$. 
Then $\wh[\nu]$ is also denoted as
\begin{equation*}
\wh[\nu] = \Big( \bigoplus_{n \in (1/m)\Z} \h^{(mn; \nu)} \otimes \C t^n \Big) \oplus \C K.
\end{equation*}

Let $\wnu$ be an automorphism of the VOA $V_L$ which is a lift of $\nu$. 
Since $L$ is unimodular, there is a unique irreducible $\wnu$-twisted $V_L$-module 
up to equivalence \cite{DLM2000}. 
The irreducible $\wnu$-twisted $V_L$-module constructed in \cite{DL1996, Lepowsky1985} 
is of the form
\begin{equation*}
V_L(\wnu) = M(1)[\nu] \otimes T
\end{equation*} 
as a vector space, 
where $M(1)[\nu]$ is the symmetric algebra $S(\wh[\nu]^-)$ of an abelian Lie algebra 
\begin{equation*}
\wh[\nu]^- = \bigoplus_{n \in (1/m)\Z_{< 0}} \h^{(mn; \nu)} \otimes \C t^n
\end{equation*}
and $T$ is an irreducible module for a certain finite group. 
The symmetric algebra $S(\wh[\nu]^-)$ is spanned by the elements of the form 
\begin{equation*}
h_r(-n_r) \cdots h_1(-n_1) 1
\end{equation*}
with $r \in \Z_{\ge 0}$, $n_j \in (1/m)\Z_{> 0}$ and $h_j \in \h^{(-mn_j;\nu)}$, $1 \le j \le r$.
The weight of an element $h_r(-n_r) \cdots h_1(-n_1) 1 \otimes u \in V_L(\wnu)$ 
with $u \in T$ is given by 
\begin{equation}\label{eq:wt_of_irred_twisted_module}
n_1 + \cdots + n_r + \rho,
\end{equation}  
where
\begin{equation}\label{eq:conformal_wt_formula_1}
\rho = \rho(V_L(\widehat{\nu})) = \frac{1}{4 m^2} \sum_{i = 1}^{m-1} i (m-i) \dim \h^{(i;\nu)}
\end{equation}
is the conformal weight of $V_L(\widehat{\nu})$. 

The dimension of $T$ is determined in \cite[(4.53)]{BK2004},  
\cite[Proposition 6.2]{Lepowsky1985} and we have 
\begin{equation}\label{eq.dim_T_general}
\dim T = \sqrt{ |L/(1-\nu)L|}.
\end{equation}

\subsection{Fixed-point-free isometries of $\Lambda$ of order $2p$, $p = 3,5,7,13$}
\label{subsec:sigma}

The automorphism group $Co_0 = O(\Lambda)$ of the Leech lattice $\Lambda$ 
is a central extension of the largest Conway group $Co_1$ by a group of order $2$. 
The central element of $O(\Lambda)$ of order $2$ is the $-1$ isometry 
$\theta : \alpha \mapsto -\alpha$ for $\alpha \in \Lambda$. 
The character of the natural representation of $O(\Lambda)$ on the $24$ dimensional space 
$\Lambda \otimes_\Z \C$ is denoted by $\chi_{102}$ in \cite[page 186]{ATLAS}. 
We see from the values of $\chi_{102}$ that the following lemma holds.

\begin{lem}\label{lem:def-of-sigma}
For $p = 3, 5, 7, 13$, there exists a unique, up to conjugacy, isometry $\tau \in O(\Lambda)$ 
of order $p$ which acts fixed-point-freely on $\Lambda$. 
Let $\sigma = \theta \tau \in O(\Lambda)$. 
Then $\sigma$ is of order $2p$ and $\sigma^i$ acts fixed-point-freely on $\Lambda$ 
for all $1 \le i \le 2p-1$.
\end{lem}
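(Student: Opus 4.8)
The plan is to reduce everything to an inspection of the character $\chi_{102}$ together with an elementary eigenvalue computation. Throughout, I identify an isometry with its action on the $24$-dimensional space $\Lambda \otimes_\Z \C$. An isometry $g$ acts fixed-point-freely precisely when its fixed subspace is zero, i.e. when
\begin{equation*}
\dim (\Lambda \otimes_\Z \C)^g = \frac{1}{|g|} \sum_{k=0}^{|g|-1} \chi_{102}(g^k) = 0.
\end{equation*}
Since $\Lambda \otimes_\Z \Q$ is a rational representation of $O(\Lambda)$, the character $\chi_{102}$ is integer-valued. Hence for $g = \tau$ of prime order $p$ the eigenvalues are $p$th roots of unity whose primitive ones occur with a common multiplicity $m$; writing $a_0$ for the multiplicity of the eigenvalue $1$ gives $a_0 + (p-1)m = 24$ and $\chi_{102}(\tau) = a_0 - m$. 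Solving, the value $\chi_{102}(\tau) = -24/(p-1)$ forces $a_0 = 0$ and $m = 24/(p-1)$. Thus, among the order-$p$ classes, $\tau$ is fixed-point-free if and only if $\chi_{102}(\tau) = -24/(p-1)$, equivalently the characteristic polynomial of $\tau$ equals $(1 + x + \cdots + x^{p-1})^{24/(p-1)}$; note that $24/(p-1)$ is an integer exactly for $p = 3,5,7,13$, giving the values $-12, -6, -4, -2$.

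First I would read off the order-$p$ rows of $\chi_{102}$ from \cite[page 186]{ATLAS}: for each $p \in \{3,5,7,13\}$ exactly one conjugacy class of order $p$ takes the value $-24/(p-1)$, which simultaneously establishes the existence and the uniqueness up to conjugacy of the fixed-point-free $\tau$. This table inspection is the one genuinely external input, and I expect it to be the main obstacle, in that it is where the specific arithmetic of the Leech lattice (through the Conway group) enters; the remaining assertions are formal and uniform in $p$.

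Finally I would analyze $\sigma = \theta\tau$. As $\theta = -1$ is central in $O(\Lambda)$ of order $2$, it commutes with $\tau$, and since $\langle \theta \rangle \cap \langle \tau \rangle = 1$ (orders $2$ and $p$ being coprime) the order of $\sigma$ is $\mathrm{lcm}(2,p) = 2p$. The eigenvalues of $\sigma = -\tau$ on $\Lambda \otimes_\Z \C$ are exactly the numbers $-\xi$, where $\xi$ runs over the primitive $p$th roots of unity (the eigenvalues of $\tau$), each with multiplicity $24/(p-1)$. For such $\xi = \xi_p^{\,j}$ with $\gcd(j,p) = 1$, one checks that $-\xi_p^{\,j} = \xi_{2p}^{\,p+2j}$ with $\gcd(p+2j, 2p) = 1$, so every eigenvalue of $\sigma$ is a primitive $2p$th root of unity of exact multiplicative order $2p$. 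Consequently, for $1 \le i \le 2p-1$ no eigenvalue of $\sigma^i$ equals $1$, whence $(\Lambda \otimes_\Z \C)^{\sigma^i} = 0$ and $\sigma^i$ is fixed-point-free. This completes the plan.
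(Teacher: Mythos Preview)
Your proposal is correct and follows essentially the same approach as the paper: the paper's entire argument is the single sentence ``We see from the values of $\chi_{102}$ that the following lemma holds,'' and your proof is a careful unpacking of precisely that inspection, together with a clean eigenvalue computation showing that the eigenvalues of $\sigma=-\tau$ are primitive $2p$th roots of unity so that no nontrivial power has a fixed point. The extra detail you provide (the rationality argument forcing equal multiplicities, the numerology $\chi_{102}(\tau)=-24/(p-1)$, and the verification that $\gcd(p+2j,2p)=1$) is all sound and makes explicit what the paper leaves to the reader.
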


\begin{rmk}
For $p = 3, 5, 7, 13$, the isometry $\sigma$ of Lemma \ref{lem:def-of-sigma} is 
the unique, up to conjugacy, isometry of $\Lambda$ of order $2p$ such that $\sigma^i$ 
acts  fixed-point-freely on $\Lambda$ for all $1 \le i \le 2p-1$.
\end{rmk}

\section{Irreducible $\sigma^i$-twisted $V_\Lambda$-modules}
\label{sec:irred_sigma_i-twisted}

For $p = 3, 5, 7, 13$, let $\sigma$ be as in Section \ref{subsec:sigma}. 
Thus $\sigma$ is an isometry of the Leech lattice 
$\Lambda$ of order $m = 2p$ and 
$\sigma^i$ is fixed-point-free on $\Lambda$ for all $1 \le i \le m-1$.
Since $\sigma^{m/2}$ is the $-1$ isometry of $\Lambda$, we have 
$\la \al, \sigma^{m/2} \al \ra = - \la \al, \al \ra \in 2\Z$ for $\al \in \Lambda$. 
Moreover, $\Lambda^\sigma = \{ \al \in \Lambda | \sigma \al = \al \} = 0$. 
Hence there is a lift $\widehat{\sigma} \in \Aut V_\Lambda$ of $\sigma$ of order $m$ 
by \cite[Proposition 7.2]{EMS2015}. 
For simplicity of notation, we use the  same symbol $\sigma$ to denote 
$\widehat{\sigma}$. 
That is, $\sigma$ denotes both an isometry of $\Lambda$ 
of order $m$ whose $i$th power is fixed-point-free on $\Lambda$ for all $1 \le i \le m-1$  
and an automorphism of the VOA  
$V_\Lambda$ of order $m$ which is a lift of the isometry.

Let 
\begin{equation*}
\theta = \sigma^p, \quad \tau = \sigma^{p+1}.
\end{equation*}
Then $\sigma  = \theta \tau = \tau \theta$, $|\theta| = 2$, $|\tau| = p$, 
$\la \sigma  \ra = \la \theta, \tau \ra$ 
and $\theta$ is a lift of the $-1$ isometry of $\Lambda$.

We follow the notation in Section \ref{subsec:irred_twisted_V_L-module} 
with $L = \Lambda$ and $\nu = \sigma^i$ for the irreducible 
$\sigma^i$-twisted $V_\Lambda$-modules 
$V_\Lambda(\sigma^i) = M(1)[\sigma^i] \otimes T$, 
$1 \le i \le m-1$. 
Thus $\h = \Lambda \otimes_\Z \C$ and 
\begin{equation}\label{eq:sigma_i-eigenspace}
\h^{(j; \sigma^i)} = \{ h \in \h | \sigma^i h = \xi_m^{-j} h \}, \quad \xi_m = e^{2\pi\sqrt{-1}/m}.
\end{equation}

\subsection{Conformal weight of $V_\Lambda(\sigma^i)$}

The conformal weight of $V_\Lambda(\sigma^i)$ is 
\begin{equation}\label{eq:conformal_wt_formula_2}
\rho(V_\Lambda(\sigma^i)) = \frac{1}{4 m^2} \sum_{j = 1}^{m-1} j (m-j) \dim \h^{(j;\sigma^i)}.
\end{equation}
by Eq. \eqref{eq:conformal_wt_formula_1}.

\begin{lem}\label{lem:dim_of_h_eigenspace}
The dimension of $\h^{(j; \sigma^i)}$, $1 \le i \le m-1$, $j \in \Z_m$ is as follows.

$(1)$ If $i$ is odd and $i \ne p$, then
\begin{equation*}
\dim \h^{(j; \sigma^i)} = 
\begin{cases}
24/(p-1) & (j \text{ is odd}, j \ne p),\\
0 & (\text{otherwise}).
\end{cases}
\end{equation*}

$(2)$ If $i$ is even, then
\begin{equation*}
\dim \h^{(j; \sigma^i)} = 
\begin{cases}
24/(p-1) & (j \text{ is even}, j \ne 0),\\
0 & (\text{otherwise}).
\end{cases}
\end{equation*}

$(3)$ If $i = p$, then
\begin{equation*}
\dim \h^{(j; \sigma^p)} = 
\begin{cases}
24 & (j = p),\\
0 & (j \ne p).
\end{cases}
\end{equation*}
\end{lem}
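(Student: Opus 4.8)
The plan is to reduce everything to the spectrum of $\sigma$ itself and then read off the spectra of the powers $\sigma^i$ by elementary bookkeeping with roots of unity. The starting point is that $\sigma^i$ acts on the $\sigma$-eigenspace $\h^{(j';\sigma)}$ by the scalar $\xi_m^{-ij'}$, so, writing all indices in $\Z_m$,
\begin{equation*}
\h^{(j;\sigma^i)} \;=\; \bigoplus_{\substack{j' \in \Z_m \\ i j' = j}} \h^{(j';\sigma)},
\end{equation*}
whence $\dim \h^{(j;\sigma^i)} = \sum_{i j' = j} \dim \h^{(j';\sigma)}$. Thus the whole lemma follows once the numbers $\dim \h^{(j';\sigma)}$ are known together with a count of the surviving indices $j'$.

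First I would pin down the spectrum of $\sigma$ on $\h$. Since $\sigma$ preserves the lattice $\Lambda$, its characteristic polynomial on $\h$ lies in $\Z[x]$. The hypothesis that $\sigma^i$ is fixed-point-free for every $1 \le i \le m-1$ says that no eigenvalue $\zeta$ of $\sigma$ satisfies $\zeta^i = 1$ for such $i$; as $\zeta^m = 1$, every eigenvalue must be a primitive $m$th root of unity. (An eigenvalue of order $d \mid 2p$ with $d < 2p$ would force $d \in \{1,2,p\}$ and hence a nonzero fixed vector of $\sigma$, $\sigma^2$, or $\sigma^p$, which is excluded.) By integrality the characteristic polynomial is therefore a power of the cyclotomic polynomial $\Phi_{2p}$, and since $\deg \Phi_{2p} = \varphi(2p) = p-1$ divides $24$, each primitive $m$th root occurs with multiplicity $24/(p-1)$. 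Equivalently,
\begin{equation*}
\dim \h^{(j;\sigma)} =
\begin{cases}
24/(p-1) & \gcd(j,m) = 1,\\
0 & \text{otherwise},
\end{cases}
\end{equation*}
and $\gcd(j,2p) = 1$ is precisely the condition that $j$ be odd with $j \ne p$.

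With this in hand I would settle the three cases by counting, for each $j$, the residues $j' \in \Z_m^\times$ with $i j' = j$ in $\Z_m$, since only these contribute and each contributes $24/(p-1)$. If $i$ is odd and $i \ne p$, then $\gcd(i,2p) = 1$, so multiplication by $i$ permutes $\Z_m^\times$; hence there is exactly one such $j'$ when $j \in \Z_m^\times$ and none otherwise, giving case $(1)$. If $i$ is even, then $i j'$ is always even, forcing $j$ even; writing $i = 2a$ and $j = 2b$ the relation becomes $a j' \equiv b \pmod p$, and because $\Z_{2p}^\times \cong \Z_p^\times$ by the CRT and $a \in \Z_p^\times$, there is exactly one solution iff $p \nmid b$, i.e.\ iff $j$ is even and $j \ne 0$, giving case $(2)$. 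Finally $i = p$ is immediate: $\sigma^p = \theta$ is the $-1$ isometry, acting as $\xi_m^{-p} = -1$ on all of $\h$, so $\h^{(p;\sigma^p)} = \h$ and the other eigenspaces vanish, which is case $(3)$.

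The only real content is the spectral computation of the second step; once the eigenvalues of $\sigma$ are identified, the three cases are routine congruence counting. I expect the determination of the spectrum to be the main point, and the integrality-plus-fixed-point-free argument above makes it self-contained. An alternative would be to read the traces $\tr(\sigma^{\ell}) = \chi_{102}(\sigma^{\ell})$ from the ATLAS and recover the multiplicities by the finite Fourier inversion $\dim \h^{(j;\sigma^i)} = \tfrac{1}{m}\sum_{k=0}^{m-1} \xi_m^{jk}\,\chi_{102}(\sigma^{ik})$, but the fixed-point-free hypothesis already forces the spectrum without invoking the character table.
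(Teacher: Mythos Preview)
Your proof is correct. The paper's argument uses the same two essential ingredients—the fixed-point-free hypothesis to kill the unwanted eigenspaces, and a Galois/change-of-generator symmetry to equate the dimensions of the surviving ones—but organizes them case by case: for each $\sigma^i$ it first shows directly that a nonzero vector in a forbidden eigenspace would be fixed by some nontrivial power of $\sigma$, and then argues $\dim\h^{(2k-1;\sigma^i)}=\dim\h^{(1;\sigma^i)}$ via the identity $\h^{(2k-1;(\sigma^i)^{2k-1})}=\h^{(1;\sigma^i)}$ for $2k-1$ coprime to $m$. Your route is a bit more streamlined: you compute the spectrum of $\sigma$ once for all, using integrality of the characteristic polynomial to force it to be $\Phi_{2p}^{24/(p-1)}$, and then read off all three cases by a uniform congruence count in $\Z_m$. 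The paper's per-case treatment avoids invoking integrality explicitly, but its dimension-equality step is really the same Galois symmetry you exploit; conversely, your approach makes the uniformity transparent at the cost of one extra sentence justifying that the characteristic polynomial is a power of $\Phi_{2p}$. Either argument is short and self-contained.
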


\begin{proof}
Let $h \in \h^{(j; \sigma^i)}$. 
First, assume that $i$ is odd and $i \ne p$. 
If $j$ is even, then $jp \equiv 0 \pmod{m}$ and 
$(\sigma^i)^p h = \xi_m^{-jp} h = h$. 
Since $(\sigma^i)^p = \theta$, this implies that $h = 0$. 
If $j = p$, then $\sigma^i h = \xi_m^{-p}h = -h$ and $\sigma^{i-p} h = h$. 
Since $\sigma^k$ is fixed-point-free on $\Lambda$ for $0 \ne k \in \Z_m$, 
it follows that $h = 0$. 
Therefore, we have the eigenspace decomposition 
\begin{equation*}
\h = \bigoplus_{\substack{1 \le k \le p\\ k \ne (p+1)/2}} \h^{(2k-1; \sigma^i)}
\end{equation*}
of $\h$ by $\sigma^i$.
Since $m$ and $2k-1$ are coprime for $1 \le k \le p$, $k \ne (p+1)/2$, we have
\begin{equation*}
\dim \h^{(2k-1; \sigma^i)} = \dim \h^{(2k-1; (\sigma^i)^{2k-1})}
= \dim \h^{(1; \sigma^i)}.
\end{equation*}
Thus the assertion (1) holds.

Next, assume that $i$ is even. 
Then the order of $\sigma^i$ is the prime $p$. 
The eigenspace decomposition of $\h$ by $\sigma^i$ is
\begin{equation*}
\h = \bigoplus_{1 \le k \le p-1} \h^{(2k; \sigma^i)}
\end{equation*}
and $\dim h^{(2k; \sigma^i)}$, $1 \le k \le p-1$ coincide each other.
Hence the assertion (2) holds.

Since $\sigma^p$ is the $-1$ isometry of $\Lambda$, the assertion (3) is clear.
\end{proof}

By Lemma \ref{lem:dim_of_h_eigenspace} and Eq. \eqref{eq:conformal_wt_formula_2}, 
we can calculate the conformal weight of $V_\Lambda(\sigma^i)$.
\begin{lem}\label{lem:conformal_wt}
The conformal weight of $V_\Lambda(\sigma^i)$, $1 \le i \le m-1$ is as follows.
\begin{equation*}
\rho(V_\Lambda(\sigma^i)) = 
\begin{cases}
(2p-1)/2p & (i \text{ is odd}, i \ne p),\\
(p+1)/p & (i \text{ is even}),\\
3/2 & (i = p).
\end{cases}
\end{equation*}
\end{lem}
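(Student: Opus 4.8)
The plan is to feed the eigenspace dimensions of Lemma~\ref{lem:dim_of_h_eigenspace} directly into the conformal weight formula \eqref{eq:conformal_wt_formula_2} and to evaluate the resulting finite sums in each of the three cases. Since $m = 2p$ and the summand $j(m-j)$ is invariant under $j \mapsto m-j$, in Cases (1) and (2) the nonzero contributions come from a set of indices closed under this involution, which keeps the bookkeeping symmetric; the common value $24/(p-1)$ of the dimensions can be pulled out of the sum, and one should expect the factor $p-1$ to cancel against a matching factor produced by the summation.

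For Case (1) ($i$ odd, $i \ne p$), I would substitute $j = 2k-1$ for $1 \le k \le p$, $k \ne (p+1)/2$, so that
\[
\rho(V_\Lambda(\sigma^i)) = \frac{1}{4m^2}\,\frac{24}{p-1}\Bigl(\sum_{k=1}^{p}(2k-1)\bigl(2p-(2k-1)\bigr) - p^2\Bigr),
\]
where the subtracted $p^2$ removes the forbidden index $j = p$. Using $\sum_{k=1}^p(2k-1)=p^2$ and $\sum_{k=1}^p(2k-1)^2 = p(2p-1)(2p+1)/3$, the bracket simplifies to $p(p-1)(2p-1)/3$, and after cancelling $p-1$ and $m^2 = 4p^2$ one obtains $(2p-1)/(2p)$. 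For Case (2) ($i$ even), I would instead put $j = 2k$ for $1 \le k \le p-1$, so the summand becomes $4k(p-k)$; the elementary identities for $\sum k$ and $\sum k^2$ over $1 \le k \le p-1$ give $\sum_{k=1}^{p-1}4k(p-k) = 2p(p-1)(p+1)/3$, and the same cancellations yield $(p+1)/p$.

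Case (3) ($i = p$) is immediate: only $j = p$ survives, with $\dim \h^{(p;\sigma^p)} = 24$, so $\rho = p(m-p)\cdot 24/(4m^2) = 3/2$. The only real obstacle here is arithmetic care—tracking which residues $j$ actually contribute and not mishandling the excluded index $j = p$ in Case (1)—rather than any conceptual difficulty; once Lemma~\ref{lem:dim_of_h_eigenspace} is in hand the computation is entirely elementary.
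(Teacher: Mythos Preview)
Your proposal is correct and follows exactly the approach the paper indicates: the paper merely states that the conformal weights are obtained ``by Lemma~\ref{lem:dim_of_h_eigenspace} and Eq.~\eqref{eq:conformal_wt_formula_2},'' and you have supplied the explicit arithmetic the paper leaves to the reader. Your case split, index substitutions, and the elementary identities you invoke all check out, and the cancellation of the factor $p-1$ proceeds as you anticipate.
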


\subsection{Dimension of $T$}

The dimension of $T$ of the irreducible $\sigma^i$-twisted $V_\Lambda$-module
$V_\Lambda(\sigma^i) = M(1)[\sigma^i] \otimes T$ is 
\begin{equation}\label{eq.dim_T_sigma_i}
\dim T = \sqrt{ |\Lambda/(1-\sigma^i)\Lambda|}.
\end{equation}
by \eqref{eq.dim_T_general}. 

\begin{lem}\label{lem:dim_T}
The dimension of $T$, $1 \le i \le m-1$ is as follows.
\begin{equation*}
\dim T = 
\begin{cases}
1 & (i \text{ is odd}, i \ne p),\\
p^{12/(p-1)} & (i \text{ is even}),\\
2^{12} & (i = p).
\end{cases}
\end{equation*}
\end{lem}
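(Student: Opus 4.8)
The plan is to compute $\dim T = \sqrt{|\Lambda/(1-\sigma^i)\Lambda|}$ case by case, according to the parity of $i$ and the special value $i=p$, exactly as the conformal weight was computed in Lemma \ref{lem:conformal_wt}. The key observation is that the index $|\Lambda/(1-\sigma^i)\Lambda|$ can be controlled through the action of $\sigma^i$ on $\h = \Lambda \otimes_\Z \C$, whose eigenspace dimensions are already recorded in Lemma \ref{lem:dim_of_h_eigenspace}. Specifically, the map $1 - \sigma^i$ is invertible on $\h$ precisely because $\sigma^i$ is fixed-point-free (so $1$ is not an eigenvalue), and therefore $(1-\sigma^i)\Lambda$ is a finite-index sublattice of $\Lambda$ whose index equals $|\det(1-\sigma^i)|$ evaluated on the integral lattice $\Lambda$.

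First I would treat the case $i = p$, where $\sigma^p = \theta$ is the $-1$ isometry. Then $1 - \sigma^p = 2\cdot\id$ on all of $\h$, so $(1-\sigma^p)\Lambda = 2\Lambda$ and $|\Lambda/2\Lambda| = 2^{24}$ since $\Lambda$ has rank $24$. Hence $\dim T = \sqrt{2^{24}} = 2^{12}$, giving the third case. For the two remaining cases, I would compute $\det(1-\sigma^i)$ from the eigenvalue data. When $i$ is odd and $i \ne p$, the isometry $\sigma^i$ has order $m = 2p$ and (by Lemma \ref{lem:dim_of_h_eigenspace}(1)) acts on $\h$ with each primitive $2p$-th root of unity $\xi_m^{-j}$, $j$ odd and $j \ne p$, occurring with multiplicity $24/(p-1)$. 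Thus $\det(1-\sigma^i) = \prod_{\zeta}(1-\zeta)^{24/(p-1)}$, the product running over the primitive $2p$-th roots of unity. The quantity $\prod_{\zeta \text{ primitive } 2p\text{-th}}(1-\zeta) = \Phi_{2p}(1)$ is the value of the cyclotomic polynomial at $1$, which equals $1$ for $p$ prime (since $\Phi_{2p}(1) = 1$ when $2p$ is not a prime power $> 1$). This forces $|\det(1-\sigma^i)| = 1$, so $(1-\sigma^i)\Lambda = \Lambda$ and $\dim T = 1$, the first case.

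The even case $i$ is entirely analogous: here $\sigma^i$ has order $p$ and acts with each primitive $p$-th root of unity of multiplicity $24/(p-1)$ by Lemma \ref{lem:dim_of_h_eigenspace}(2), so $\det(1-\sigma^i) = \Phi_p(1)^{24/(p-1)} = p^{24/(p-1)}$ using the standard fact $\Phi_p(1) = p$. Therefore $|\Lambda/(1-\sigma^i)\Lambda| = p^{24/(p-1)}$ and $\dim T = p^{12/(p-1)}$, which is the second case.

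The main obstacle is the step identifying $|\Lambda/(1-\sigma^i)\Lambda|$ with $|\det(1-\sigma^i)|$ and then with the appropriate power of a cyclotomic value; one must verify that $\sigma^i$ genuinely preserves $\Lambda$ as an integral lattice (it does, being an isometry) and that the complex eigenvalue computation correctly yields an \emph{integer} index. The subtlety is that the raw determinant $\det(1-\sigma^i) = \prod_\zeta (1-\zeta)^{24/(p-1)}$ is computed over $\C$, but because the eigenvalues come in complex-conjugate pairs (the nonreal primitive roots) and the product over a full set of conjugate roots is rational, the value is automatically a positive integer. I would make this rigorous by grouping the eigenvalues into Galois orbits and invoking $\Phi_n(1)$, being careful that in the odd case $i \ne p$ the relevant roots are exactly the primitive $2p$-th roots (the factor $\xi_m^{-p} = -1$ corresponding to $j = p$ is excluded by Lemma \ref{lem:dim_of_h_eigenspace}(1), consistent with fixed-point-freeness). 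Once the determinant is correctly recognized as $\Phi_{2p}(1)^{24/(p-1)} = 1$ or $\Phi_p(1)^{24/(p-1)} = p^{24/(p-1)}$, taking the square root gives the stated values of $\dim T$.
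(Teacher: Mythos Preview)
Your proof is correct and takes a genuinely different route from the paper. The paper handles the odd case $i \ne p$ by exhibiting an explicit integral inverse for $1-\sigma^i$: rewriting the minimal polynomial as $\Phi_{2p}(x) = (x-1)\sum_{k=1}^{(p-1)/2} x^{2k-1} + 1$ yields $\alpha = (1-\sigma^i)\big(\sum_k \sigma^{i(2k-1)}\big)\alpha$ for every $\alpha \in \Lambda$, so $(1-\sigma^i)\Lambda = \Lambda$ without any determinant computation. For the even case the paper cites an external result (\cite[Lemma~A.1]{GL2011}) giving the full group structure $\Lambda/(1-\sigma^i)\Lambda \cong \Z_p^{24/(p-1)}$. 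Your determinant approach via $|\Lambda/(1-\sigma^i)\Lambda| = |\det(1-\sigma^i)|$ together with the cyclotomic values $\Phi_{2p}(1)=1$ and $\Phi_p(1)=p$ is more uniform and entirely self-contained, avoiding the external citation. The trade-off is that the paper's argument is more constructive in the odd case and yields the elementary-divisor structure (not merely the order) in the even case, though neither refinement is needed for the lemma as stated.
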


\begin{proof}
If $i$ is odd and $i \ne p$, 
then the eigenvalues of $\sigma^i$ on $\h$ are exactly the primitive $m$th 
roots of unity by Lemma \ref{lem:dim_of_h_eigenspace} (1), and so 
the minimal polynomial of $\sigma^i$ on $\h$ is a cyclotomic polynomial
\begin{equation*}
\begin{split}
\Phi_m(x) 
&= \sum_{k=0}^{p-1} (-1)^k x^k\\
&= (x-1) \big( \sum_{k=1}^{(p-1)/2} x^{2k-1} \big) + 1.
\end{split}
\end{equation*}

Hence 
\begin{equation*}
\al = (1 - \sigma^i) \big( \sum_{k=1}^{(p-1)/2} \sigma^{i(2k-1)} \big) \al 
\in (1 - \sigma^i) \Lambda
\end{equation*}
for $\al \in \Lambda$. 
Thus $(1 - \sigma^i) \Lambda = \Lambda$ and $\dim T = 1$. 

If $i$ is even, then the order of $\sigma^i$ is $p$ 
and the minimal polynomial of $\sigma^i$ on $\h$ is a cyclotomic polynomial
\begin{equation*}
\Phi_p(x) = x^{p-1} + \cdots + x + 1
\end{equation*}
by Lemma \ref{lem:dim_of_h_eigenspace} (2). 
Hence $\Lambda/(1-\sigma^i)\Lambda \cong \Z_p^{24/(p-1)}$ 
by \cite[Lemma A.1]{GL2011} and $\dim T = p^{12/(p-1)}$ . 
If $i = p$, then $\Lambda/(1-\sigma^i)\Lambda  = \Lambda/2\Lambda \cong \Z_2^{24}$ 
and $\dim T = 2^{12}$.
\end{proof}

\section{$\Z_{2p}$-orbifold constructions}
\label{sec:Z_2p-orbifold_construction}

We keep the notation in Section \ref{sec:irred_sigma_i-twisted}. 
Since the conformal weight of the irreducible $\sigma$-twisted $V_\Lambda$-module 
$V_\Lambda(\sigma)$ belongs to $(1/m)\Z$ by Lemma \ref{lem:conformal_wt}, 
for each $i \in \Z_m$,  
we can choose a representation $\phi_i$ of the group $\la \sigma \ra$ on 
the irreducible $\sigma^i$-twisted $V_\Lambda$-module $V_\Lambda(\sigma^i)$ 
so that the eigenspace $W^{(i,j)}$ of $\phi_i(\sigma)$ in $V_\Lambda(\sigma^i)$, 
$i,j \in \Z_m$ satisfy the four conditions 
of Theorem \ref{thm:EMS2015} 
with $V = V_\Lambda$, $g = \sigma$ and $n = m = 2p$.
In particular, 

$(1)$ $W^{(i,j)} \boxtimes W^{(k,l)} \cong W^{(i+k, j+l)}$.

$(2)$ The conformal weight of $W^{(i,j)}$ is $q_{\Delta}((i,j)) \equiv ij/m \pmod{\Z}$.

The eigenspace decomposition of $V_\Lambda(\sigma^i)$ for $\phi_i(\sigma)$ is 
\begin{equation}\label{eq:sigma_i-twisted_dec}
V_\Lambda(\sigma^i) = \bigoplus_{j \in \Z_m} W^{(i,j)}
\end{equation}

The condition (2) implies that 
only $W^{(2k-1,0)}$ is of integral weight among $W^{(2k-1,j)}$, $j \in \Z_m$ 
for $i= 2k-1$, $1 \le k \le p$, $k \ne (p+1)/2$;  
only $W^{(2k,0)}$ and $W^{(2k,p)}$ are of integral weight among 
$W^{(2k,j)}$, $j \in \Z_m$ for $i = 2k$, $0 \le k \le p-1$; and 
$W^{(p,j)}$ is of integral weight if and only if $j$ is even for $i = p$.

There are four maximal isotropic subgroups of $\Z_m \times \Z_m$ 
with respect to the quadratic form $q_{\Delta}$, namely,
\begin{align*}
H_1 &= \{(0,j) | j \in \Z_m\}, & H_2&= \{ (i,0) | i \in \Z_m\},\\
H_3 &= \{ (2k,pk) | k \in \Z_m\}, & H_4 &= \{ (pk,2k) | k \in \Z_m\}.
\end{align*}

For $r = 1,2,3,4$, 
\begin{equation*}
\widetilde{V}^{(r)} = \bigoplus_{(i,j) \in H_r} W^{(i,j)}
\end{equation*}
admits a structure of a simple, rational, $C_2$-cofinite, holomorphic VOA  
of CFT-type which extends the VOA structure of 
$V_\Lambda^{\la \sigma \ra}$ 
by Theorem \ref{thm:EMS2015} (4).

In the case $r=1$, 
\begin{equation*}
\widetilde{V}^{(1)} = \bigoplus_{j=0}^{2p-1} W^{(0,j)}
\end{equation*}
is the eigenspace decomposition of $V_\Lambda$ by the automorphism $\sigma$, 
that is, $\widetilde{V}^{(1)} = V_\Lambda$. Moreover,
\begin{equation}
V_\Lambda^+ = \bigoplus_{k=0}^{p-1} W^{(0,2k)}, \qquad 
V_\Lambda^- = \bigoplus_{k=0}^{p-1} W^{(0,2k+1)},
\end{equation}
where 
\begin{equation*}
V_\Lambda^{\pm} = \{ v \in V_\Lambda | \theta v = \pm v\}.
\end{equation*}

Next, we consider 
\begin{equation*}
\widetilde{V}^{(2)} = \bigoplus_{i=0}^{2p-1} W^{(i,0)}.
\end{equation*}

We calculate the weight $1$ subspace $(\widetilde{V}^{(2)})_1$ of $\widetilde{V}^{(2)}$. 
If $i$ is even or $i = p$, then the conformal weight of $V_\Lambda(\sigma^i)$ 
is greater than $1$ by Lemma \ref{lem:conformal_wt}. 
Hence 
\begin{equation*}
(\widetilde{V}^{(2)})_1 = \bigoplus_{\substack{0 \le k \le p-1\\ k \ne (p-1)/2}} 
(W^{(2k+1,0)})_1.
\end{equation*}

For each $0 \le k \le p-1$, $k \ne (p-1)/2$, the conformal weight of 
$V_\Lambda(\sigma^{2k+1})$ is $(2p-1)/2p$ by Lemma \ref{lem:conformal_wt} 
and $W^{(2k+1,0)}$ is the only irreducible $V_\Lambda^{\la \sigma \ra}$-module 
with integral weights among $W^{(2k+1,j)}$, $j \in \Z_m$ 
by the condition (2) of Theorem \ref{thm:EMS2015}.
Thus by Eq. \eqref{eq:wt_of_irred_twisted_module} and 
Lemmas \ref{lem:dim_of_h_eigenspace} and \ref{lem:dim_T}, we see that 
\begin{equation*}
\dim (W^{(2k+1,0)})_1 = 24/(p-1).
\end{equation*}

Therefore, $\dim (\widetilde{V}^{(2)})_1 = 24$. 
Furthermore, $a_{(0)} b =  0$ for $a, b \in (\widetilde{V}^{(2)})_1$ by the fusion rule  
$W^{(i,0)} \boxtimes W^{(j,0)} \cong W^{(i+j, 0)}$ as $i + j$ is even if both $i$ and $j$ 
are odd. 
Then $\widetilde{V}^{(2)}$ is a holomorphic VOA of central charge $24$ 
whose weight $1$ subspace is a $24$ dimensional abelian Lie algebra with respect to 
the bracket $[a,b] = a_{(0)} b$. 
Thus $\widetilde{V}^{(2)} = \bigoplus_{i=0}^{2p-1} W^{(i,0)}$ is isomorphic to 
$V_\Lambda$ by \cite[Theorem 3]{DMb} and we obtain the following theorem.

\begin{thm}\label{thm:V-2}
$V_\Lambda \cong \bigoplus_{i=0}^{2p-1} W^{(i,0)}$.
\end{thm}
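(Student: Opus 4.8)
The plan is to identify $\widetilde{V}^{(2)} = \bigoplus_{i=0}^{2p-1} W^{(i,0)}$ as a holomorphic vertex operator algebra of central charge $24$ and then to determine its isomorphism type from its weight-one Lie algebra. The first step is structural: since $H_2 = \{(i,0) \mid i \in \Z_m\}$ is a maximal isotropic subgroup of $\Z_m \times \Z_m$ with respect to $q_{\Delta}$, Theorem \ref{thm:EMS2015}(4) already yields that $\widetilde{V}^{(2)}$ is a simple, rational, $C_2$-cofinite, holomorphic VOA of CFT-type extending $V_\Lambda^{\la \sigma \ra}$; its Virasoro element is that of $V_\Lambda^{\la \sigma \ra}$, so its central charge is $24$. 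Hence the whole problem reduces to analyzing $(\widetilde{V}^{(2)})_1$ together with the bracket $[a,b] = a_{(0)}b$ on it.

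Next I would compute the dimension of $(\widetilde{V}^{(2)})_1$. By Lemma \ref{lem:conformal_wt} the summands $W^{(i,0)}$ with $i$ even or $i = p$ have conformal weight $(p+1)/p$ or $3/2$, both exceeding $1$, and so contribute nothing in weight one. For odd $i \ne p$ the conformal weight is $(2p-1)/2p$, and condition (2) of Theorem \ref{thm:EMS2015} singles out $W^{(i,0)}$ as the unique integral-weight module in the $\sigma^i$-twisted sector. I would then read off $\dim (W^{(i,0)})_1$ from the weight formula \eqref{eq:wt_of_irred_twisted_module} together with Lemmas \ref{lem:dim_of_h_eigenspace} and \ref{lem:dim_T} (here $\dim T = 1$, so the weight-one space arises only from the single creation operator $h(-1)$ acting on the vacuum of $M(1)[\sigma^i]$), obtaining $\dim (W^{(i,0)})_1 = 24/(p-1)$. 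Summing over the $p-1$ odd indices $i \ne p$ gives $\dim (\widetilde{V}^{(2)})_1 = 24$.

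It then remains to show the weight-one algebra is abelian. For $a \in (W^{(i,0)})_1$ and $b \in (W^{(j,0)})_1$ with $i,j$ odd, the product $a_{(0)}b$ lies in $(W^{(i+j,0)})_1$ by the fusion rule $W^{(i,0)} \boxtimes W^{(j,0)} \cong W^{(i+j,0)}$. Since $i+j$ is even, either $i + j \not\equiv 0 \pmod{2p}$, in which case $W^{(i+j,0)}$ has conformal weight $(p+1)/p > 1$ and hence no weight-one vectors, or $i + j \equiv 0$, in which case $W^{(0,0)} = V_\Lambda^{\la \sigma \ra}$ and its weight-one space is $\h^\sigma = 0$ because $\sigma$ is fixed-point-free on $\h$. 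Either way $a_{(0)}b = 0$, so $(\widetilde{V}^{(2)})_1$ is a $24$-dimensional abelian Lie algebra.

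Finally I would invoke the classification \cite[Theorem 3]{DMb}: a holomorphic VOA of central charge $24$ whose weight-one space is a $24$-dimensional abelian Lie algebra is isomorphic to $V_\Lambda$. This gives $\widetilde{V}^{(2)} \cong V_\Lambda$, which is the assertion. I expect the main obstacle to be the weight-one bookkeeping in the second step, where one must correctly combine the twisted-weight formula, the eigenspace dimensions, and $\dim T$ across the $p-1$ odd sectors, together with the careful handling of the degenerate case $i + j \equiv 0$ in the bracket computation, for which triviality rests on $\h^\sigma = 0$ rather than on a conformal-weight estimate.
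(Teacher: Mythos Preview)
Your argument is correct and follows the paper's proof essentially line for line: both establish holomorphicity via Theorem~\ref{thm:EMS2015}(4), compute $\dim(\widetilde{V}^{(2)})_1 = 24$ from Lemmas~\ref{lem:conformal_wt}, \ref{lem:dim_of_h_eigenspace}, \ref{lem:dim_T}, show the bracket vanishes via the fusion rules, and conclude by \cite[Theorem~3]{DMb}. Your explicit treatment of the degenerate case $i+j \equiv 0$ (where one must use $\h^\sigma = 0$ rather than a conformal-weight bound) is a welcome clarification the paper leaves implicit.
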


Define a linear isomorphism $\theta' : \widetilde{V}^{(2)} \to \widetilde{V}^{(2)}$  
by $(-1)^i$ on $W^{(i,0)}$. Then $\theta'$ is an automorphism of the VOA 
$\widetilde{V}^{(2)}$ by the fusion rule 
$W^{(i,0)} \boxtimes W^{(j,0)} \cong W^{(i+j,0)}$. 
The automorphism $\theta'$ is of order $2$ and $-1$ on the weight $1$ subspace. 
Hence it is conjugate to a lift $\theta$ of the $-1$ isometry of $\Lambda$.
Indeed, $\theta^{-1}\theta'$ acts as the identity on the weight $1$ subspace of 
$\widetilde{V}^{(2)} = V_\Lambda$.
Hence $\theta^{-1}\theta'$ is an inner automorphism $e^{h(0)}$ for some $h\in \h$ by 
\cite[Lemma 2.5]{DN1999} and we have
\begin{equation*}
\theta' = \theta e^{h(0)} = e^{-\frac{1}{2}h(0)} \theta e^{\frac{1}{2}h(0)}
\end{equation*} 
as required. Then
\begin{equation*}
V_\Lambda^+ = V_\Lambda^{\la \theta \ra} 
\cong (\widetilde{V}^{(2)})^{\la \theta' \ra} = \bigoplus_{k=0}^{p-1} W^{(2k,0)}.
\end{equation*} 

Therefore,  the following theorem holds.
\begin{thm}\label{thm:v3}
$V_\Lambda^+ \cong \bigoplus_{k=0}^{p-1} W^{(2k,0)}$ and 
$V_\Lambda^- \cong \bigoplus_{k=0}^{p-1} W^{(2k+1,0)}$.
\end{thm}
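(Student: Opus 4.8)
The plan is to deduce the decomposition from the conjugacy $\theta' = e^{-\frac12 h(0)}\,\theta\, e^{\frac12 h(0)}$ established in the preceding discussion, by transporting eigenspaces across the inner automorphism $g = e^{\frac12 h(0)}$ of $V_\Lambda$. Since $\theta$ and $\theta'$ are conjugate, their fixed-point subVOAs are isomorphic, and more generally $g$ carries each eigenspace of $\theta$ isomorphically onto the eigenspace of $\theta'$ of the same eigenvalue.

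First I would record the eigenspace decomposition of $\widetilde{V}^{(2)}$ under $\theta'$. By construction $\theta'$ acts as $(-1)^i$ on $W^{(i,0)}$, and this is a VOA automorphism because the fusion rule $W^{(i,0)} \boxtimes W^{(j,0)} \cong W^{(i+j,0)}$ makes the parity of $i$ a $\Z_2$-grading compatible with the vertex operation. Hence the $+1$-eigenspace of $\theta'$ is $\bigoplus_{k=0}^{p-1} W^{(2k,0)}$ and its $-1$-eigenspace is $\bigoplus_{k=0}^{p-1} W^{(2k+1,0)}$. Next, under the identification $\widetilde{V}^{(2)} = V_\Lambda$ of Theorem \ref{thm:V-2}, the relation $\theta' = g\,\theta\, g^{-1}$ shows that if $\theta v = \lambda v$ then $\theta'(g v) = \lambda\, g v$; thus $g$ maps $V_\Lambda^+ = V_\Lambda^{\la\theta\ra}$ onto $(\widetilde{V}^{(2)})^{\la\theta'\ra} = \bigoplus_{k} W^{(2k,0)}$ and $V_\Lambda^-$ onto $\bigoplus_{k} W^{(2k+1,0)}$. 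Since $g$ is an automorphism of $V_\Lambda$, these are isomorphisms of the respective fixed-point subVOAs and of the $(-1)$-eigenspaces as modules over them, which is exactly the assertion.

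The main obstacle is really the conjugacy step, already carried out above: it is not enough that $\theta'$ is an order-$2$ automorphism acting as $-1$ on the weight $1$ subspace, since a priori many inequivalent involutions have this property. What pins $\theta'$ down as a genuine lift of the $-1$ isometry is the rigidity result \cite[Lemma 2.5]{DN1999}, which forces $\theta^{-1}\theta'$---an automorphism fixing $(V_\Lambda)_1$ pointwise---to be inner of the form $e^{h(0)}$. Given that input, the present statement is a formal consequence of transporting eigenspaces, so the only care needed here is to verify that all weight $1$ vectors of $\widetilde{V}^{(2)}$ indeed lie in the odd-$i$ summands (so that $\theta'$ acts as $-1$ on $(\widetilde{V}^{(2)})_1$), which follows from Lemma \ref{lem:conformal_wt} because the even-$i$ and $i=p$ twisted modules have conformal weight greater than $1$.
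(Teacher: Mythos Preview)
Your proposal is correct and follows essentially the same approach as the paper: the conjugacy $\theta' = e^{-\frac12 h(0)}\,\theta\, e^{\frac12 h(0)}$ via \cite[Lemma 2.5]{DN1999} is established in the discussion immediately preceding the theorem, and the paper concludes $V_\Lambda^+ = V_\Lambda^{\la\theta\ra} \cong (\widetilde{V}^{(2)})^{\la\theta'\ra} = \bigoplus_{k=0}^{p-1} W^{(2k,0)}$ directly from it. You have simply made explicit the eigenspace transport under the inner automorphism that the paper leaves implicit, and spelled out both the $+$ and $-$ cases.
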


Next, we consider $\widetilde{V}^{(3)} = \bigoplus_{k=0}^{2p-1} W^{(2k,pk)}$. 
Note that $2(p+i)\equiv 2i\pmod{2p}$. Moreover, $pi\equiv 0 \pmod{2p}$ if $i$ is even and 
$pi\equiv p \pmod{2p}$ if $i$ is odd. 
Hence 
\begin{equation*}
H_3 = \{ (2k,0) | 0 \le k \le p-1\} \cup \{ (2k,p) | 0 \le k \le p-1\}.
\end{equation*}
Thus
$\widetilde{V}^{(3)}$ contains $\bigoplus_{k=0}^{p-1} W^{(2k,0)} \cong V_\Lambda^+$. 
The VOA $V_\Lambda^+$ has exactly four irreducible modules, 
namely, $V_\Lambda^\pm$ and $V_\Lambda^{T,\pm}$, where 
$V_\Lambda^{T,\pm}$ are the eigenspaces with eigenvalues $\pm 1$ of $\theta$ 
in a unique irreducible $\theta$-twisted $V_\Lambda$-module $V_\Lambda(\theta)$. 
All of those four irreducible $V_\Lambda^+$-modules are simple current. 
We take $V_\Lambda^{T,\pm}$ 
so that the conformal weight of 
$V_\Lambda^{T,+}$ is $2$ and that of $V_\Lambda^{T,-}$ is $3/2$. 
The weight $1$ subspace of $\widetilde{V}^{(3)}$ is $0$, 
for the conformal weight of $V_\Lambda(\sigma^{2k})$, $1 \le k \le p-1$ 
is greater than $1$ by Lemma \ref{lem:conformal_wt}. 
Hence we conclude that $\widetilde{V}^{(3)}$ is isomorphic to the Moonshine VOA  
$V^\natural = V_\Lambda^+ \oplus V_\Lambda^{T,+} $ constructed in \cite{FLM1988}. 
Thus the following theorem holds. 
\begin{thm}\label{eq:Z_2p-orbifold_construction}
$V^\natural
\cong \bigoplus_{k=0}^{2p-1} W^{(2k,pk)}$ with 
$V_\Lambda^+ \cong \bigoplus_{k=0}^{p-1} W^{(2k,0)}$
and 
$V_\Lambda^{T,+} \cong \bigoplus_{k=0}^{p-1} W^{(2k,p)}$.
\end{thm}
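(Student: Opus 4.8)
The plan is to analyze $\widetilde{V}^{(3)}$ as a module over the subVOA $V_\Lambda^+$ and to identify its two graded components. As recorded in the discussion preceding the statement, the rewriting $H_3 = \{(2k,0)\mid 0\le k\le p-1\}\cup\{(2k,p)\mid 0\le k\le p-1\}$ splits the decomposition as
\begin{equation*}
\widetilde{V}^{(3)} = \Big( \bigoplus_{k=0}^{p-1} W^{(2k,0)} \Big) \oplus \Big( \bigoplus_{k=0}^{p-1} W^{(2k,p)} \Big),
\end{equation*}
and by Theorem~\ref{thm:v3} the first summand is $V_\Lambda^+$. Since $\widetilde{V}^{(3)}$ is a simple VOA extending $V_\Lambda^+$ by Theorem~\ref{thm:EMS2015}(4), and is graded by $H_3\cong\Z_{2p}$ with $V_\Lambda^+$ corresponding to the index-$2$ subgroup consisting of the even values of the parameter, the complement $M := \bigoplus_{k=0}^{p-1} W^{(2k,p)}$ is a single irreducible $V_\Lambda^+$-module (a standard property of simple current extensions). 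It is distinct from $V_\Lambda^+$, since its irreducible $V_\Lambda^{\la\sigma\ra}$-constituents carry labels $(2k,p)$ rather than $(2k',0)$.

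The core of the argument is to identify $M$ among the four irreducible $V_\Lambda^+$-modules $V_\Lambda^\pm$ and $V_\Lambda^{T,\pm}$ by elimination. Having excluded $V_\Lambda^+$, three candidates remain. I would rule out $V_\Lambda^-$ using the vanishing of the weight-$1$ space: since $\theta$ acts as $-1$ on $(V_\Lambda)_1 = \h$ we have $(V_\Lambda^+)_1 = 0$; each twisted constituent $V_\Lambda(\sigma^{2k})$ with $1\le k\le p-1$ has conformal weight $(p+1)/p > 1$ by Lemma~\ref{lem:conformal_wt}; and the remaining constituent $W^{(0,p)}$ is the $\sigma$-eigenspace of eigenvalue $-1$ inside $V_\Lambda$, which has no weight-$1$ vectors because $\dim\h^{(p;\sigma)} = 0$ by Lemma~\ref{lem:dim_of_h_eigenspace}(1). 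Hence $(\widetilde{V}^{(3)})_1 = 0$, whereas the weight-$1$ space of $V_\Lambda^-$ is $24$-dimensional. I would rule out $V_\Lambda^{T,-}$ by integrality: its conformal weight is $3/2$, incompatible with the $\Z_{\ge 0}$-grading of the VOA $\widetilde{V}^{(3)}$, whereas every $W^{(2k,p)}$ has conformal weight $\equiv k \pmod{\Z}$ by condition~(2) of Theorem~\ref{thm:EMS2015}. This forces $M \cong V_\Lambda^{T,+}$.

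It then follows that $\widetilde{V}^{(3)} = V_\Lambda^+ \oplus V_\Lambda^{T,+}$ as $V_\Lambda^+$-modules, which is exactly the decomposition of $V^\natural$ in the construction of \cite{FLM1988}; since the simple VOA structure on $V_\Lambda^+ \oplus V_\Lambda^{T,+}$ extending $V_\Lambda^+$ is unique up to isomorphism, I conclude $\widetilde{V}^{(3)} \cong V^\natural$, together with the stated refinements $V_\Lambda^+ \cong \bigoplus_k W^{(2k,0)}$ and $V_\Lambda^{T,+} \cong \bigoplus_k W^{(2k,p)}$. I expect the main obstacle to be precisely the identification of $M$: the weight-$1$ vanishing and the integrality of the conformal weights are the two inputs that separate $V_\Lambda^{T,+}$ from the remaining candidates, so the crux is the conformal-weight bookkeeping of Lemmas~\ref{lem:conformal_wt} and~\ref{lem:dim_of_h_eigenspace} together with the uniqueness of simple current extensions.
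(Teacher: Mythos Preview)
Your proposal is correct and follows essentially the same route as the paper. The paper argues that $\widetilde{V}^{(3)}$ contains $V_\Lambda^+ \cong \bigoplus_{k=0}^{p-1} W^{(2k,0)}$ (via Theorem~\ref{thm:v3}), observes that $(\widetilde{V}^{(3)})_1 = 0$ from the conformal weight bounds of Lemma~\ref{lem:conformal_wt}, and then concludes $\widetilde{V}^{(3)} \cong V^\natural$ using the classification of the four irreducible $V_\Lambda^+$-modules; your elimination of $V_\Lambda^-$ and $V_\Lambda^{T,-}$ by weight-$1$ vanishing and integrality makes this last step explicit, and your check that $(W^{(0,p)})_1 = 0$ via $\dim \h^{(p;\sigma)} = 0$ fills in a detail the paper leaves tacit.
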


Recall that $\tau = \sigma^{p+1}$ is of order $p$.  
We also note that 
\begin{equation}\label{eq:H_3_another_form}
\begin{split}
H_3 
&= \{ ((p+1)k, pk) | 0 \le k \le 2p-1 \}\\
&= \{ ((p+1)k, 0) | 0 \le k \le p-1 \} \cup \{ ((p+1)k, p) | 0 \le k \le p-1 \}.
\end{split}
\end{equation}

Let 
\begin{equation}\label{eq:U-decomposition}
U^{(k,0)} = W^{((p+1)k,0)} \oplus W^{((p+1)k,p)}, \quad 0 \le k \le p-1.
\end{equation}

For $v \in V_\Lambda$, we have $\tau v = v$ if and only if $v$ is a sum of eigenvectors 
of  $\sigma$ of eigenvalues $\pm 1$. 
Hence $U^{(0,0)} = W^{(0,0)} \oplus W^{(0,p)}$ is equal to 
\begin{equation}
V_\Lambda ^{\la \tau \ra} = \{ v \in V_\Lambda | \tau v = v \}.
\end{equation}

The irreducible $\tau^k = \sigma^{(p+1)k}$-twisted $V_\Lambda$-module 
$V_\Lambda(\tau^k) = V_\Lambda(\sigma^{(p+1)k})$, 
$1 \le k \le p-1$  
is a direct sum of 
eigenspaces for $\tau$. 
For each $k$, there is a unique one with integral weights among those 
eigenspaces for $\tau$, namely, $U^{(k,0)}$. 
Since the conformal weight of $V_\Lambda(\tau) = V_\Lambda(\sigma^{p+1})$ 
belongs to $(1/p)\Z$ by Lemma \ref{lem:conformal_wt},
\begin{equation}\label{eq:tau-Z_p-orbifold_construction}
\widetilde{V}_{\Lambda,\tau} = \bigoplus_{k=0}^{p-1} U^{(k,0)}
\end{equation}
is a $\Z_p$-orbifold construction with respect to $V_\Lambda$ and $\tau$ 
by Theorem \ref{thm:EMS2015}. 

Both $V^\natural$ and $\widetilde{V}_{\Lambda,\tau}$ are simple current extensions 
of $V_\Lambda^{\la \sigma \ra}$ with the same simple current components 
$W^{(i,j)}$, $(i,j) \in H_3$ 
by Theorem \ref{eq:Z_2p-orbifold_construction}, 
Eq. \eqref{eq:H_3_another_form}, \eqref{eq:U-decomposition} 
and \eqref{eq:tau-Z_p-orbifold_construction}. 
Since the VOA structure of a simple current extension 
is unique (see \cite{DMq} and \cite[Proposition 5.3]{DM2004}), 
it follows that $V^\natural$ and $\widetilde{V}_{\Lambda,\tau}$ are 
isomorphic as VOAs. 

\begin{thm}\label{thm:main}
$\widetilde{V}_{\Lambda,\tau} \cong V^\natural$.
\end{thm}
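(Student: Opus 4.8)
The plan is to identify both $\widetilde{V}_{\Lambda,\tau}$ and $V^\natural$ as simple current extensions of the \emph{same} orbifold subVOA $V_\Lambda^{\la \sigma \ra}$ with the \emph{same} irreducible components, and then to conclude via the rigidity (uniqueness) of the VOA structure on a simple current extension. All the hard analytic input—the conformal weights in Lemma~\ref{lem:conformal_wt}, the identification $V^\natural \cong \widetilde{V}^{(3)}$ in Theorem~\ref{eq:Z_2p-orbifold_construction}, and the fact that every $W^{(i,j)}$ is a simple current module (Theorem~\ref{thm:EMS2015})—is already available, so the argument reduces to a finite index bookkeeping together with one structural theorem.

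First I would re-express the maximal isotropic subgroup $H_3 = \la (2,p)\ra$ of $\Z_m \times \Z_m$ using the alternative generator $((p+1),p)$. Since $p$ is an odd prime, $p+1$ is even and coprime to $p$, so $p+1$ has order $p$ and $p$ has order $2$ in $\Z_{2p}$; hence $((p+1),p)$ has order $\operatorname{lcm}(p,2)=2p$ in $\Z_{2p}\times\Z_{2p}$ and therefore generates the whole group $H_3$. This yields the description \eqref{eq:H_3_another_form}, namely $H_3 = \{((p+1)k, pk) \mid 0\le k\le 2p-1\}$, which splits into the two families $\{((p+1)k,0)\}$ and $\{((p+1)k,p)\}$ for $0\le k\le p-1$ according to the parity of $pk \bmod 2p$. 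By Theorem~\ref{eq:Z_2p-orbifold_construction} this exhibits the set of irreducible $V_\Lambda^{\la \sigma \ra}$-constituents of $V^\natural \cong \widetilde{V}^{(3)} = \bigoplus_{(i,j)\in H_3} W^{(i,j)}$ as precisely $\{ W^{((p+1)k,0)},\, W^{((p+1)k,p)} \mid 0\le k\le p-1\}$.

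Next I would compare this with $\widetilde{V}_{\Lambda,\tau}$. A priori this object is defined in \eqref{eq:tau-Z_p-orbifold_construction} as a $\Z_p$-orbifold construction, i.e.\ a simple current extension of the \emph{coarser} subalgebra $V_\Lambda^{\la \tau \ra}$ by the $\Z_p$-graded family $U^{(k,0)}$. The key observation is that, since $V_\Lambda^{\la \sigma \ra}\subset V_\Lambda^{\la \tau \ra}$, each constituent splits by \eqref{eq:U-decomposition} as $U^{(k,0)} = W^{((p+1)k,0)}\oplus W^{((p+1)k,p)}$ into two irreducible $V_\Lambda^{\la \sigma \ra}$-modules. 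Restricting to $V_\Lambda^{\la \sigma \ra}$ therefore presents $\widetilde{V}_{\Lambda,\tau} = \bigoplus_{k=0}^{p-1} U^{(k,0)}$ as a direct sum of exactly the same family of irreducible $V_\Lambda^{\la \sigma \ra}$-modules indexed by $H_3$ that appears in $V^\natural$.

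Finally I would invoke the uniqueness of simple current extensions: two simple current extensions of one VOA $V_\Lambda^{\la \sigma \ra}$ built from the same set of simple current components $\{W^{(i,j)}\mid (i,j)\in H_3\}$ are isomorphic as VOAs (see \cite{DMq} and \cite[Proposition 5.3]{DM2004}), which gives $\widetilde{V}_{\Lambda,\tau}\cong V^\natural$. The main obstacle I anticipate lies in this last step rather than in the index matching: one must be sure that $\widetilde{V}_{\Lambda,\tau}$ genuinely carries a simple current extension structure \emph{over the finer subalgebra} $V_\Lambda^{\la \sigma \ra}$ (not merely over $V_\Lambda^{\la \tau \ra}$), and that the uniqueness theorem is applicable in this setting. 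The simple-current property of all components over $V_\Lambda^{\la \sigma \ra}$ is guaranteed by the cited orbifold results, and the rigidity of the extension is precisely the content of the uniqueness statement, so once these hypotheses are verified the conclusion is immediate; the remaining identification of the two component sets is the routine finite computation carried out above.
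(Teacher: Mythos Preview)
Your proposal is correct and follows essentially the same route as the paper: identify $V^\natural\cong\widetilde{V}^{(3)}$ and $\widetilde{V}_{\Lambda,\tau}=\bigoplus_k U^{(k,0)}$ as simple current extensions of $V_\Lambda^{\la\sigma\ra}$ with the identical set of components $\{W^{(i,j)}\mid(i,j)\in H_3\}$ via the re-indexing \eqref{eq:H_3_another_form} and the splitting \eqref{eq:U-decomposition}, and then invoke the uniqueness of simple current extensions from \cite{DMq} and \cite[Proposition~5.3]{DM2004}. Your extra verification that $((p+1),p)$ generates $H_3$ is a welcome elaboration of a step the paper leaves implicit, and the ``obstacle'' you flag is handled exactly as you suggest---every $W^{(i,j)}$ is a simple current $V_\Lambda^{\la\sigma\ra}$-module by Theorem~\ref{thm:EMS2015}, so any VOA decomposing as a direct sum of such modules is automatically a simple current extension of $V_\Lambda^{\la\sigma\ra}$.
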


\begin{rmk}
We have $V_\Lambda^+ = \bigoplus_{k=0}^{p-1} W^{(0,2k)}$ and 
$V_\Lambda^{T,+} = \bigoplus_{k=0}^{p-1} W^{(p,2k)}$, 
for $V_\Lambda(\theta) = V_\Lambda(\sigma^p)$ and 
$W^{(p,j)}$ is of integral weight only if $j$ is even. 
Since 
\begin{equation*}
H_4 = \{ (0,2k) | 0 \le k \le p-1\} \cup \{ (p,2k) | 0 \le k \le p-1\},
\end{equation*}
$\widetilde{V}^{(4)} =  \bigoplus_{k=0}^{2p-1} W^{(pk,2k)}$ is a decomposition of 
the Moonshine VOA  
$V^\natural = V_\Lambda^+ \oplus V_\Lambda^{T,+}$ into a direct sum of 
irreducible $V_\Lambda^{\la \sigma \ra}$-modules.
\end{rmk}

\appendix
\section{A characterization of the Moonshine VOA}

In this appendix, we give another characterization of the Moonshine VOA $V^\natural$ 
using Ising vectors. 
It also provides an alternative proof that 
$\widetilde{V}_{\Lambda,\tau} \cong V^\natural$ 
for $p=3$ and $5$. 
The main theorem is as follows.
\begin{thm}
Let $V$ be a simple, rational, $C_2$-cofinite, holomorphic VOA of CFT-type with 
central charge $24$ such that the weight $1$ subspace $V_1 = 0$. 
If $V$ contains two mutually orthogonal Ising vectors, 
then $V$ is isomorphic to the Moonshine VOA $V^\natural$.
\end{thm}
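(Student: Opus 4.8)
The plan is to exploit the two orthogonal Ising vectors through the commutant (coset) construction, reducing the identification of $V$ to a known uniqueness theorem for the Baby Monster VOA, and then to recover $V$ as an essentially unique extension exactly as in the main text. First I would record that, since $V$ is holomorphic of central charge $24$ with $V_1 = 0$, its graded character is forced to be the modular function $J$; in particular $\dim V_2 = 196884$, so that $V$ is already ``of the size'' of $V^\natural$ and the problem is genuinely one of matching structure rather than of counting dimensions. Write $e, f$ for the two given Ising vectors, so that $\langle e\rangle \cong \langle f\rangle \cong L(1/2,0)$, and mutual orthogonality means $e_{(n)}f = 0$ for $n \ge 0$; in particular $f$ lies in the commutant $C := \mathrm{Com}_V(\langle e\rangle)$, a VOA of central charge $47/2$, and $f$ is an Ising vector of $C$.

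The key step is to identify $C$. Decomposing $V$ over $\langle e\rangle \cong L(1/2,0)$ in the usual way gives $V = L(1/2,0)\otimes C \oplus L(1/2,1/2)\otimes C^{1/2} \oplus L(1/2,1/16)\otimes C^{1/16}$, where the Miyamoto involution $\tau_e$ acts as $-1$ on the last summand; the coset theory for Ising vectors guarantees that $C$ is again simple, rational, $C_2$-cofinite and of CFT-type, and $C_1 \subseteq V_1 = 0$ forces $C_1 = 0$. I would then invoke the known characterization of the Baby Monster VOA $VB^\natural$ --- a simple, rational, $C_2$-cofinite VOA of CFT-type of central charge $47/2$ with vanishing weight-one space --- to conclude $C \cong VB^\natural$. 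This is precisely where the second Ising vector $f$ does its work: the available uniqueness statement for $VB^\natural$ is substantially rigidified by the presence of an Ising vector inside the $c = 47/2$ algebra, and $f \in C$ supplies exactly such a vector.

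Once $C \cong VB^\natural$ is in hand, $V$ is a simple-current (``canonical'') extension of $L(1/2,0)\otimes VB^\natural$ with the three components dictated by the commutant decomposition above, and $V^\natural$ is an extension of the same sub-VOA with the same components; the uniqueness of the VOA structure on such a simple current extension (the same principle invoked for Theorem \ref{thm:main}) then yields $V \cong V^\natural$. The main obstacle is the middle step, the identification $C \cong VB^\natural$: one must verify that $C$ meets the hypotheses of the $VB^\natural$-uniqueness theorem (rationality, $C_2$-cofiniteness, CFT-type, $C_1 = 0$, and the presence of a suitable Ising vector), and it is here that the hypothesis of \emph{two} orthogonal Ising vectors, rather than one, is indispensable. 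A variant of the same strategy, bypassing the Baby Monster and instead enlarging $\{e,f\}$ to a full Virasoro frame so as to apply the framed-VOA characterization of $V^\natural$, seems far less economical, since producing the remaining frame elements from only two Ising vectors is not straightforward.
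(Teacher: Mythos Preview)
Your route is genuinely different from the paper's, and the difference is worth spelling out.

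The paper does \emph{not} pass through the Baby Monster VOA. Instead it treats the two Ising vectors symmetrically from the start: it sets $U = \vir(e)\otimes\vir(f)\cong L(\tfrac12,0)^{\otimes 2}$, decomposes $V$ over $U$ into multiplicity spaces $M(h_1,h_2)$, and then uses the product of Miyamoto involutions $t=\tau_e\tau_f$ to perform a $\Z_2$-orbifold on $V$. The point of having \emph{two} Ising vectors in the paper's argument is that the irreducible $t$-twisted $V$-module contains $L(\tfrac12,\tfrac12)\otimes L(\tfrac12,\tfrac12)\otimes M(0,0)$, which has conformal weight~$1$; so the orbifold $\widetilde V = V^{\langle t\rangle}\oplus X^{\langle t\rangle}$ has nonzero (and abelian, since $V_1=0$) weight-one space, forcing $\widetilde V\cong V_\Lambda$ by Dong--Mason. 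From $V^{\langle t\rangle}\cong V_\Lambda^+$ one finishes exactly as you propose in your final step. Thus the paper's argument is self-contained and stays within the $V_\Lambda/V^\natural$ world; the second Ising vector is used to manufacture a weight-one vector in a twisted module, not to supply an Ising vector inside a $c=47/2$ commutant.

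Your approach, by contrast, uses the Ising vectors sequentially and offloads the hard step to a characterization of $VB^\natural$: given that $C=\mathrm{Com}_V(\langle e\rangle)$ has central charge $47/2$, $C_1=0$, is rational and $C_2$-cofinite, and contains the Ising vector $f$, conclude $C\cong VB^\natural$. This is a clean reduction, and your verification that $C$ inherits rationality and $C_2$-cofiniteness (via Carnahan--Miyamoto applied twice) is fine. The concern is the black box you invoke: a standalone uniqueness theorem for $VB^\natural$ with exactly those hypotheses is not something the paper cites or assumes, and the versions available in the literature typically require more input (e.g.\ knowledge of the module category of $C$, or that $C$ extends $L(\tfrac12,0)\otimes VB^\natural$-type data, which is close to what you are trying to prove). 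If one tried to prove that characterization from scratch, the natural method would again be an orbifold argument of the same flavour as the paper's---so your reduction, while conceptually tidy, may not actually shorten the path. In short: your plan is coherent and the final extension step matches the paper's, but the paper's direct $\Z_2$-orbifold to $V_\Lambda$ avoids the dependence on an external $VB^\natural$ uniqueness result that you would still need to justify.
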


The idea is essentially the same as in \cite{LY1}  
and is also similar to that in Section \ref{sec:Z_2p-orbifold_construction}. 
We try to obtain  the Leech lattice VOA $V_\Lambda$ by using some
$\Z_2$-orbifold construction on $V$. 
The extra assumption on Ising vectors is used to 
define an involution on $V$ such that the corresponding twisted module 
has conformal weight one.

An element $e\in V_2$ 
is called an \textit{Ising vector} if the vertex subalgebra $\vir(e)$ generated by $e$ is 
isomorphic to the simple Virasoro VOA $L(1/2,0)$ of central charge $1/2$. 
Let $V_e(h)$ be the sum of all irreducible $\vir(e)$-submodules of $V$ isomorphic
to $L(1/2,h)$ for $h=0,1/2,1/16$. 
Then one has an isotypical decomposition:
\begin{equation*}
V=V_e(0)\oplus V_e(\frac{1}2)\oplus V_e(\frac{1}{16}).
\end{equation*}

Recall from \cite{M1} that the linear automorphism $\tau_e$ which acts as $1$ on 
$V_e(0)\oplus V_e(\frac{1}2)$ and $-1$ on $V_e(\frac{1}{16})$ defines an automorphism 
of the VOA $V$. 
On the fixed point subVOA $V^{\la \tau_e\ra}=V_e(0)\oplus V_e(\frac{1}2)$, 
the linear automorphism $\sigma_e$ which acts as $1$ on $V_e(0)$ 
and $-1$ on $V_e(\frac{1}{2})$ also defines an automorphism on $V^{\la \tau_e\ra}$.  

Let $e$ and $f$ be two mutually orthogonal Ising vectors in $V$ and let $U$
be the subVOA generated by $e$ and $f$. Then
\begin{equation*}
U = \vir(e) \otimes \vir(f) \cong L(\frac{1}2,0)\otimes L(\frac{1}2,0).
\end{equation*}

For any $h_1, h_2\in \{0, 1/2, 1/16\}$, we define the space of
multiplicities of the irreducible $U$-module $L(\frac{1}2, h_1) \otimes L(\frac{1}2,h_2)$ 
in $V$ by
\begin{equation*}
M(h_1, h_2)=\hom_{U}(L(\frac{1}2, h_1) \otimes L(\frac{1}2,h_2),V).
\end{equation*}
Then we have the isotypical decomposition
\begin{equation*}
V= \bigoplus_{h_1, h_2\in \{ 0, \frac{1}2, \frac{1}{16}\}} L(\frac{1}2, h_1) 
\otimes L(\frac{1}2,h_2)\otimes  M(h_1, h_2).
\end{equation*}

Notice that $M(0,0) =\mathrm{Com}_V(U) = U^c$  
is a subVOA of central charge $23$ and $M(h_1, h_2)$, $h_1, h_2\in \{0, 1/2, 1/16\}$, are
$M(0,0)$-modules.
Note also  that  $$(V^{\la \tau_e, \tau_f\ra})^{\la \sigma_e, \sigma_f\ra} 
=  L(\frac{1}2, 0) \otimes L(\frac{1}2,0)\otimes  M(0, 0).$$

\begin{prop}
The subVOA $M(0,0)$ is $C_2$-cofinite and rational.
Moreover, $M(h_1, h_2)$, $h_1, h_2\in \{0, 1/2\}$, are simple current modules of $M(0,0)$.
\end{prop}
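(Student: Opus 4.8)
The plan is to realize $L(\tfrac12,0)\otimes L(\tfrac12,0)\otimes M(0,0)$ as a two-stage fixed-point subVOA of $V$ and to read off both assertions from the regularity of fixed-point subalgebras together with quantum Galois theory. As already noted, the sectors with $h_1,h_2\in\{0,\tfrac12\}$ assemble into
\[
W:=V^{\langle\tau_e,\tau_f\rangle}=\bigoplus_{h_1,h_2\in\{0,\frac12\}}L(\tfrac12,h_1)\otimes L(\tfrac12,h_2)\otimes M(h_1,h_2),
\]
and $W^{\langle\sigma_e,\sigma_f\rangle}=L(\tfrac12,0)\otimes L(\tfrac12,0)\otimes M(0,0)$. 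Since $V$ is simple and regular of CFT-type, $W$ is again simple and regular by the regularity of fixed-point subalgebras under a finite solvable group \cite{CM2016}; applying the same result to the commuting involutions $\sigma_e,\sigma_f$ of $W$ shows that $L(\tfrac12,0)\otimes L(\tfrac12,0)\otimes M(0,0)$ is simple and regular.

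For the first assertion I would then invoke the fact that a tensor product of vertex operator algebras is rational (resp. $C_2$-cofinite) if and only if each tensor factor is. Since $L(\tfrac12,0)\otimes L(\tfrac12,0)$ is rational and $C_2$-cofinite, the displayed identification forces $M(0,0)$ to be rational and $C_2$-cofinite.

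For the simple current claim I would examine the eigenspace decomposition of the simple, regular VOA $W$ under $A=\langle\sigma_e,\sigma_f\rangle\cong\Z_2\times\Z_2$. Identifying the characters of $A$ with the pairs $(h_1,h_2)\in\{0,\tfrac12\}^2$, the corresponding eigenspace is precisely $L(\tfrac12,h_1)\otimes L(\tfrac12,h_2)\otimes M(h_1,h_2)$, while the trivial character gives $W^A$. By the quantum Galois theory of Dong and Mason, each nonzero eigenspace is an irreducible $W^A$-module, distinct characters give inequivalent modules, and the grading $W_\chi\boxtimes W_{\chi'}\cong W_{\chi\chi'}$ exhibits each of them as a simple current $W^A$-module. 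Finally, since $L(\tfrac12,h_1)\otimes L(\tfrac12,h_2)$ is a simple current $L(\tfrac12,0)\otimes L(\tfrac12,0)$-module for $h_1,h_2\in\{0,\tfrac12\}$, and fusion products over a tensor product of rational $C_2$-cofinite VOAs factor as tensor products, it follows that each $M(h_1,h_2)$, $h_1,h_2\in\{0,\tfrac12\}$, is a simple current $M(0,0)$-module.

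The main obstacle I anticipate is the bookkeeping of the two-stage construction: one must verify that $\sigma_e,\sigma_f$ genuinely extend to commuting automorphisms of $W$, that simplicity and regularity survive each passage to fixed points so that \cite{CM2016} and quantum Galois theory apply, and that the four sectors are nonzero so that $A$ acts faithfully. A second delicate point is the final descent, namely deducing the simple current property of the multiplicity space $M(h_1,h_2)$ from that of the whole eigenspace $W_\chi$; this rests on the standard but not entirely formal principle that simple currents over $B_1\otimes B_2$ are exactly the tensor products of simple currents over $B_1$ and over $B_2$.
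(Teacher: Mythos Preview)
Your proposal is correct and follows essentially the same route as the paper: realize $L(\tfrac12,0)^{\otimes 2}\otimes M(0,0)$ as the two-stage fixed-point subalgebra $(V^{\langle\tau_e,\tau_f\rangle})^{\langle\sigma_e,\sigma_f\rangle}$, apply Carnahan--Miyamoto twice to get regularity, pass to the tensor factor $M(0,0)$, and then invoke quantum Galois theory for the simple current statement. The only cosmetic difference is that the paper cites \cite[Remark~6.4]{DJX} in one stroke for the simple current conclusion, whereas you unpack this into Dong--Mason quantum Galois for the eigenspaces followed by the tensor-factorization of simple currents; your more explicit treatment of the descent to the multiplicity space is a welcome clarification of a step the paper leaves implicit.
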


\begin{proof}
Let $E=\la \tau_e, \tau_f\ra \subset \Aut V$ 
be the subgroup generated by the Miyamoto involutions $\tau_e$ and $\tau_f$.  
Then $E$ is elementary abelian of order $4$  and the fixed point subVOA is 
\[
V^E = \bigoplus_{h_1, h_2\in \{0, \frac{1}2\}} L(\frac{1}2, h_1) 
\otimes L(\frac{1}2,h_2)\otimes  M(h_1, h_2).
\]
Then  $V^E$ is $C_2$-cofinite and rational by a result of Carnahan and Miyamoto 
\cite{CM2016, Miyamoto2015}. 

Let $S=\la \sigma_e, \sigma_f\ra \subset \Aut V^E$. Then $S$ is also 
elementary abelian and hence the fixed point subVOA
\[
W= (V^E)^S = L(\frac{1}2, 0) \otimes L(\frac{1}2,0)\otimes  M(0, 0)
\]
is $C_2$-cofinite and rational.  Therefore,  $M(0, 0)$ is also $C_2$-cofinite and rational.

That $M(h_1, h_2)$, $h_1, h_2\in \{0, 1/2\}$, are simple current modules of $M(0,0)$ 
follows from the fact that $L(\frac{1}2, h_1) \otimes L(\frac{1}2,h_2)\otimes  M(h_1, h_2)$ 
are common eigenspaces of $S$ on $V^E$ and \cite[Remark 6.4]{DJX}.
\end{proof}

\begin{notation}
For $i,j\in \{0,1\}$, let $V^{(i,j)} = \{v\in V\mid \tau_e v=(-1)^iv, \tau_f v=(-1)^jv\}$.
Then
\begin{equation*}
\begin{split}
V^{(0,0)}=& V^E = \bigoplus_{h_1, h_2\in \{0, \frac{1}2\}} L(\frac{1}2, h_1) 
\otimes L(\frac{1}2,h_2)\otimes  M(h_1, h_2),\\
V^{(1,0)}=& L(\frac{1}2, \frac{1}{16}) \otimes L(\frac{1}2,0)\otimes  M(\frac{1}{16}, 0) \oplus
L(\frac{1}2, \frac{1}{16}) \otimes L(\frac{1}2,\frac{1}2)\otimes  M(\frac{1}{16}, \frac{1}2), \\
V^{(0,1)}=& L(\frac{1}2,0)\otimes L(\frac{1}2, \frac{1}{16}) \otimes   M(0, \frac{1}{16}) \oplus
L(\frac{1}2,\frac{1}2)\otimes  L(\frac{1}2, \frac{1}{16}) \otimes M(\frac{1}2,\frac{1}{16}), \\
V^{(1,1)}=& L(\frac{1}2, \frac{1}{16}) \otimes L(\frac{1}2, \frac{1}{16}) 
\otimes M(\frac{1}{16}, \frac{1}{16}).
\end{split}
\end{equation*}

Notice that $V^{(i,j)}, i,j\in \{0,1\},$ are simple current modules of $V^E$ \cite{DJX}.
\end{notation}

\begin{lem}Let $V$ be a 
simple, rational, $C_2$-cofinite, holomorphic VOA 
of CFT type with central charge $24$ such that $V_1=0$. 
Then $M(h_1, h_2)\neq 0$ for any $h_1, h_2\in \{0,\frac{1}2, \frac{1}{16}\}$.  
\end{lem}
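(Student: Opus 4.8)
The plan is to compute the graded dimension of $V$, use modular invariance to force the four ``coarse'' $E$-eigenspaces to be nonzero, and then use vertex operator products together with the Ising fusion rules to produce the remaining summands. Write $E=\langle\tau_e,\tau_f\rangle$. Since $V$ is holomorphic of central charge $24$ and CFT type with $V_1=0$, its character is the modular invariant
\[
\ch_V=\tr_V q^{L_0-1}=J(q)=q^{-1}+196884\,q+\cdots,
\]
which is fixed by the modular transformation $S$. The summand $M(0,0)\ne 0$ is immediate since it contains $\1$, so it suffices to treat the other eight multiplicity spaces; I would first establish the coarse statement that each $V^{(i,j)}$ is nonzero, and then separate the finer pieces inside them.

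For the coarse sectors I would argue by modular invariance. Put $Z(\alpha)=\tr_V\alpha\, q^{L_0-1}$ for $\alpha\in E$, so $Z(1)=J$, and use the eigenspace projection
\[
\ch_{V^{(i,j)}}=\tfrac14\sum_{\alpha\in E}\varepsilon_{ij}(\alpha)\,Z(\alpha),\qquad \varepsilon_{ij}(\alpha)\in\{\pm1\},\ \ \varepsilon_{ij}(1)=1.
\]
By Zhu's theory and its twisted refinement due to Dong--Li--Mason, for holomorphic $V$ the transformation $S$ carries $Z(\alpha)$ to a nonzero multiple of $\ch_{V(\alpha)}$, the character of the unique irreducible $\alpha$-twisted module, while $J$ is fixed. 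As $V$ is of CFT type with $V_1=0$, every twisted module $V(\alpha)$ with $\alpha\ne 1$ has strictly positive conformal weight, so $\ch_{V(\alpha)}$ has no $q^{-1}$ term. Applying $S$ to the projection formula for $(i,j)\ne(0,0)$ and reading off the coefficient of $q^{-1}$ then gives $\tfrac14$ from the $Z(1)=J$ term against $0$ from every twisted contribution; hence $\ch_{V^{(i,j)}}$, and so $V^{(i,j)}$, is nonzero. In particular $V^{(1,1)}$, $V^{(1,0)}$, $V^{(0,1)}$ are all nonzero, so $M(1/16,1/16)\ne 0$; as a byproduct $\tau_e,\tau_f$ are nontrivial and distinct, i.e. $E\cong\Z_2\times\Z_2$, and each mixed sector already carries at least one nonzero piece.

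It remains to split the four summands inside $V^{(0,0)}=V^E$ and the two summands inside each of $V^{(1,0)}$ and $V^{(0,1)}$. The natural mechanism is to feed the nonzero simple current module $V^{(1,1)}=L(\tfrac12,\tfrac1{16})\otimes L(\tfrac12,\tfrac1{16})\otimes M(\tfrac1{16},\tfrac1{16})$ into the vertex operator products $V^{(1,1)}\cdot V^{(1,1)}\subseteq V^{(0,0)}$, $V^{(1,1)}\cdot V^{(0,1)}\subseteq V^{(1,0)}$ and $V^{(1,1)}\cdot V^{(1,0)}\subseteq V^{(0,1)}$. In each tensor factor the Ising expansion $L(\tfrac12,\tfrac1{16})\boxtimes L(\tfrac12,\tfrac1{16})=L(\tfrac12,0)\oplus L(\tfrac12,\tfrac12)$ has two channels, and were both realized with nonzero coefficient the products would spread over all the isotypic components $L(\tfrac12,h_1)\otimes L(\tfrac12,h_2)\otimes M(h_1,h_2)$, forcing every $M(h_1,h_2)\ne 0$.

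The main obstacle is exactly this last point: one must show that in $V$ itself \emph{both} channels of $\sigma\times\sigma=\1+\epsilon$ occur, not merely that the abstract fusion rule is nonzero. This is genuinely the heart of the matter and is not automatic; for instance, a priori $V_e(\tfrac12)$ could vanish, in which case the $\epsilon$-channel would be absent and $M(1/2,0)=M(1/2,1/2)=0$, and the naive image-of-product argument does not exclude this (the image of the product lands in $V^E$ and could simply coincide with its $\sigma_e$-fixed part). I would therefore settle it by the structure of the representation category: $\mathrm{Rep}(U)=\mathrm{Ising}^{\boxtimes 2}$ is a nondegenerate (modular) braided fusion category, and a holomorphic extension of $U\otimes M(0,0)$ is a Lagrangian algebra whose support over $\mathrm{Rep}(U)$ must be the whole set of simple objects (equivalently, by commutant/mirror-extension duality $M(0,0)$ realizes the dual category and $V$ decomposes diagonally). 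This forces $M(h_1,h_2)=\hom_U\big(L(\tfrac12,h_1)\otimes L(\tfrac12,h_2),V\big)\ne 0$ for all nine pairs, and making this full-support statement precise — rather than the coarse nonvanishing of the previous paragraph — is where the real work lies.
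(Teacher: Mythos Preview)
Your final paragraph gestures at the right mechanism (mirror extension / Lagrangian algebras, which is essentially the Krauel--Miyamoto theorem the paper invokes), but you have not isolated the one step where the hypothesis $V_1=0$ actually does the work, and without it the ``full support'' claim is false in general. Commutant duality and the diagonal decomposition $V\cong\bigoplus_i U_i\otimes (U^c)_i$ require $(U^c)^c=U$; if the double commutant is a proper extension $U'\supsetneq U$, then $V$ decomposes diagonally over $U'$ rather than $U$, and simple $U$-modules need not all occur. For $U=L(\tfrac12,0)^{\otimes 2}$ there is exactly one nontrivial simple VOA extension, namely $U'=U\oplus L(\tfrac12,\tfrac12)^{\otimes 2}$, and its weight~$1$ space is nonzero. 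Since $V_1=0$ this extension cannot sit inside $V$, so $(U^c)^c=U$; \emph{then} Krauel--Miyamoto gives $M(h_1,h_2)\ne 0$ for all nine pairs at once. This is precisely the paper's argument, and it is short once one sees that $V_1=0$ is used to pin down the double commutant, not merely to fix the character.

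By contrast, your modular-invariance step uses $V_1=0$ only to identify $\ch_V=J$ and to assert that twisted characters have no $q^{-1}$ term. This yields at best the coarse nonvanishing of the $V^{(i,j)}$, and even that presupposes $|E|=4$, which your argument does not independently establish: if, say, $\tau_e=1$ then $Z(\tau_e)=J$ and your projection for $V^{(1,0)}$ collapses to zero, so nothing is learned. The first half is therefore a detour --- it neither rules out $(U^c)^c\supsetneq U$ nor feeds into the categorical step --- and once $(U^c)^c=U$ is in hand it becomes unnecessary. The genuine gap is that your Lagrangian-algebra sentence assumes exactly the conclusion that needs proving; the missing ingredient is the double-commutant computation, and that is where $V_1=0$ must be spent.
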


\begin{proof}
Recall $U = \vir(e) \otimes \vir(f)\cong L(\frac{1}2, 0) \otimes L(\frac{1}2,0)$.  
Then the double commutant $(U^c)^c$ is an extension of $U$. 
Note that there is only one non-trivial extension of $U$, 
which is isomorphic to 
$L(\frac{1}2, 0) \otimes L(\frac{1}2,0) 
\oplus L(\frac{1}2, \frac{1}2) \otimes L(\frac{1}2,\frac{1}2)$ 
and the weight one subspace is non-zero. 
Hence $(U^c)^c=U$, for $V_1=0$.  
Therefore, by a result of Krauel and Miyamoto \cite{KMi}, all irreducible modules of $U$ 
must appear as a submodule of $V$ since $V$ is holomorphic and $U$ and $M(0,0)$ are 
$C_2$-cofinite and rational. 
It implies $M(h_1, h_2)\neq 0$ for any $h_1, h_2\in \{0,\frac{1}2, \frac{1}{16}\}$.  
\end{proof}

The following two results follow immediately from the general
arguments on simple current extensions \cite{LY, Yamauchi2004}. 

\begin{lem}\label{cor:5.7}
Let $M= L(\frac{1}2, \frac{1}2)  \otimes L(\frac{1}2, \frac{1}2) \otimes M(0, 0)$ and  set
\[
\widetilde{M} = \bigoplus_{h_1, h_2\in \{0, \frac{1}2\}} L(\frac{1}2, \frac{1}{2}-h_1)  
\otimes L(\frac{1}2, \frac{1}2-h_2) \otimes M(h_1, h_2).
\]
Then $\widetilde{M}$ is an irreducible module of $V^E$.
\end{lem}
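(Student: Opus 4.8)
The plan is to exhibit $\widetilde{M}$ as the $V^E$-module induced from the irreducible $W$-module $M$ along the simple current extension $W \subset V^E$, and then to apply the standard irreducibility criterion for induced modules over a simple current extension.

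Recall that
\begin{equation*}
V^E = \bigoplus_{h_1, h_2 \in \{0, \frac{1}2\}} W_{(h_1,h_2)}, \qquad
W_{(h_1,h_2)} = L(\tfrac{1}2, h_1) \otimes L(\tfrac{1}2, h_2) \otimes M(h_1, h_2),
\end{equation*}
is a $\Z_2 \times \Z_2$-graded simple current extension of the VOA $W = W_{(0,0)} = L(\frac{1}2, 0) \otimes L(\frac{1}2, 0) \otimes M(0,0)$; each graded piece is a simple current $W$-module, since $L(\frac{1}2, \frac{1}2)$ is a simple current over $L(\frac{1}2, 0)$ and the $M(h_1, h_2)$, $h_1, h_2 \in \{0, \frac{1}2\}$, are simple currents over $M(0,0)$ by the preceding proposition. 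The module $M = L(\frac{1}2, \frac{1}2) \otimes L(\frac{1}2, \frac{1}2) \otimes M(0,0)$ is an irreducible $W$-module.

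First I would compute the fusion product of $M$ with each graded piece $W_{(h_1,h_2)}$ over $W$. Using the Ising fusion rule $L(\frac{1}2, \frac{1}2) \boxtimes L(\frac{1}2, h) \cong L(\frac{1}2, \frac{1}2 - h)$ for $h \in \{0, \frac{1}2\}$, together with $M(h_1, h_2) \boxtimes_{M(0,0)} M(0,0) \cong M(h_1, h_2)$, one obtains
\begin{equation*}
W_{(h_1,h_2)} \boxtimes_W M \cong L(\tfrac{1}2, \tfrac{1}2 - h_1) \otimes L(\tfrac{1}2, \tfrac{1}2 - h_2) \otimes M(h_1, h_2).
\end{equation*}
Summing over $(h_1,h_2) \in \{0, \frac{1}2\}^2$ identifies $\widetilde{M}$ with the induced module $\bigoplus_{(h_1,h_2)} W_{(h_1,h_2)} \boxtimes_W M$.

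Next I would determine the stabilizer $D_M = \{ (h_1,h_2) \in \Z_2 \times \Z_2 \mid W_{(h_1,h_2)} \boxtimes_W M \cong M \}$. By the fusion computation, $W_{(h_1,h_2)} \boxtimes_W M \cong M$ forces $L(\frac{1}2, \frac{1}2 - h_1) \cong L(\frac{1}2, \frac{1}2)$ and $L(\frac{1}2, \frac{1}2 - h_2) \cong L(\frac{1}2, \frac{1}2)$, hence $h_1 = h_2 = 0$; thus $D_M$ is trivial. Moreover the summands $W_{(h_1,h_2)} \boxtimes_W M$ are pairwise non-isomorphic irreducible $W$-modules, already distinguished by their Ising tensor factors.

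Finally, since $M$ is an irreducible $W$-module whose stabilizer $D_M$ in the grading group is trivial and whose induced summands are mutually non-isomorphic, the general theory of simple current extensions \cite{Yamauchi2004, LY} endows the induced module $\widetilde{M}$ with the structure of an irreducible $V^E$-module. The only substantive point is the triviality of $D_M$, which follows at once from the Ising fusion rule $L(\frac{1}2, \frac{1}2) \boxtimes L(\frac{1}2, \frac{1}2) \cong L(\frac{1}2, 0)$; granting the simple current framework set up above, the irreducibility is then formal, so I do not anticipate a genuine obstacle beyond organizing this framework correctly.
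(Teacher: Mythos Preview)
Your proposal is correct and follows exactly the route indicated by the paper, which provides no proof beyond the remark that the lemma ``follow[s] immediately from the general arguments on simple current extensions'' in \cite{LY, Yamauchi2004}. You have simply made explicit the induction-along-simple-currents argument the paper invokes: compute the $W$-fusion of each graded piece $W_{(h_1,h_2)}$ with $M$ via the Ising rule $L(\tfrac12,\tfrac12)\boxtimes L(\tfrac12,h)\cong L(\tfrac12,\tfrac12-h)$, observe that the stabilizer is trivial because the Ising factors already separate the summands, and conclude irreducibility of the induced module.
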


\begin{thm}
Let $t=\tau_e\tau_f$ and set
\[
X=\bigoplus_{i,j \in \{0,1\}} V^{(i,j)} \boxtimes_{V^E} \widetilde{M}.
\]
Then $X$ is an irreducible $t$-twisted module of $V$.
\end{thm}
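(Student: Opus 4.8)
The plan is to realize $X$ as the module induced from $\widetilde M$ along the simple current extension $V = \bigoplus_{i,j\in\{0,1\}} V^{(i,j)}$ of $V^0 := V^E$, and then to read off the twisting automorphism from a monodromy computation. By Lemma~\ref{cor:5.7} the module $\widetilde M$ is an irreducible $V^0$-module, the $V^{(i,j)}$ are simple currents, and $V = \bigoplus_{i,j} V^{(i,j)}$. The general theory of (twisted) modules over simple current extensions of \cite{Yamauchi2004, LY} then applies: it produces on $X = \bigoplus_{i,j} V^{(i,j)} \boxtimes_{V^0} \widetilde M$ a twisted vertex operator map $Y_X$, assembled from the intertwining operators among the simple currents $V^{(i,j)}$ and the summands $V^{(k,l)}\boxtimes_{V^0}\widetilde M$, making $X$ a $g$-twisted $V$-module, where $g \in \Aut V$ acts on the homogeneous piece $V^{(i,j)}$ by the monodromy scalar $e^{2\pi\sqrt{-1}\,q_{(i,j)}(\widetilde M)}$; here $q_{(i,j)}(\widetilde M) \in \Q/\Z$ is the monodromy charge of $\widetilde M$ relative to the simple current $V^{(i,j)}$. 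Moreover each $V^{(i,j)}\boxtimes_{V^0}\widetilde M$ is irreducible, being the fusion of a simple current with an irreducible module, and since these four modules lie in distinct sectors — distinguished by which of the two Ising factors $\vir(e),\vir(f)$ is of $\frac{1}{16}$-type, that is, by $(i,j)$ itself — they are mutually inequivalent, so the stabilizer of $\widetilde M$ in $\Z_2\times\Z_2$ is trivial; the general theory then gives that $X$ is irreducible. Hence everything reduces to showing $g = t = \tau_e\tau_f$.

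To identify $g$ I would compute, writing $\rho(\cdot)$ for the conformal weight,
\[
q_{(i,j)}(\widetilde M) \equiv \rho\big(V^{(i,j)}\boxtimes_{V^0}\widetilde M\big) - \rho\big(V^{(i,j)}\big) - \rho(\widetilde M) \pmod{\Z}.
\]
Since $V$ is a VOA of CFT type it is $\Z_{\ge 0}$-graded, so every $V^0$-submodule of $V$ has integral weights; in particular $\rho(V^{(i,j)})\in\Z$. Inspecting the four summands of $\widetilde M$ together with the integrality of the weights of the summands $L(\frac{1}{2},h_1)\otimes L(\frac{1}{2},h_2)\otimes M(h_1,h_2)$ of $V$ — which forces $\rho(M(h_1,h_2)) \equiv -h_1-h_2 \pmod{\Z}$ — shows $\rho(\widetilde M)\in\Z$ as well, so that $q_{(i,j)}(\widetilde M)\equiv \rho(V^{(i,j)}\boxtimes_{V^0}\widetilde M)\pmod{\Z}$. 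Fusing $V^{(i,j)}$ componentwise with $\widetilde M$ by means of the Ising fusion rules, in particular $L(\frac{1}{2},\frac{1}{16})\boxtimes L(\frac{1}{2},h) = L(\frac{1}{2},\frac{1}{16})$ for $h\in\{0,\frac{1}{2}\}$, and again using $\rho(M(h_1,h_2))\equiv -h_1-h_2$, one finds for instance for $(i,j)=(1,0)$ a summand $L(\frac{1}{2},\frac{1}{16})\otimes L(\frac{1}{2},\frac{1}{2})\otimes M(\frac{1}{16},0)$ of weight $\equiv \frac{1}{16}+\frac{1}{2}-\frac{1}{16}=\frac{1}{2}\pmod{\Z}$; the remaining cases are analogous and yield $\rho(V^{(i,j)}\boxtimes_{V^0}\widetilde M)\equiv (i+j)/2\pmod{\Z}$. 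Therefore $q_{(i,j)}(\widetilde M)\equiv (i+j)/2$, so $g$ acts on $V^{(i,j)}$ by $(-1)^{i+j}$, which is precisely the scalar by which $t=\tau_e\tau_f$ acts on $V^{(i,j)}$. As $V=\bigoplus_{i,j}V^{(i,j)}$, we conclude $g=t$, and $X$ is an irreducible $t$-twisted $V$-module.

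The substantive input is the simple current machinery of \cite{Yamauchi2004, LY}, which supplies the twisted vertex operator map and verifies the twisted Jacobi identity with the correct fractional powers of $z$, as well as the irreducibility criterion; this is exactly where ``follows immediately from the general arguments'' is doing its work, and I would cite it rather than reprove it. The only genuine hand computation is the conformal-weight bookkeeping above, and the one point to watch is the branch convention relating the monodromy character $e^{2\pi\sqrt{-1}\,q_{(i,j)}(\widetilde M)}$ to the twisting automorphism. Because $t$ has order $2$, however, the relevant character $(-1)^{i+j}$ is its own inverse, so this convention ambiguity cannot affect the identification and $g=t$ holds unambiguously.
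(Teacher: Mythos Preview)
Your proof is correct and is precisely in line with the paper's approach: the paper gives no argument for this theorem beyond the sentence ``The following two results follow immediately from the general arguments on simple current extensions \cite{LY, Yamauchi2004},'' and you have supplied exactly the computation that justifies this citation, namely the monodromy-charge calculation identifying the twisting automorphism with $t=\tau_e\tau_f$ and the inequivalence of the four summands giving irreducibility. Nothing needs to be changed.
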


Define 
\[
\widetilde{V}= V^{\langle t\rangle}\oplus X^{\langle t\rangle},
\]
where $X^{\langle t\rangle}$ is the irreducible $V^{\langle t\rangle}$-submodule of $X$ which has integral weights.  
Then $\widetilde{V}$ is a simple, rational, $C_2$-cofinite, holomorphic VOA of CFT-type. 
Notice that the conformal weight of 
$M= L(\frac{1}2, \frac{1}2)  \otimes L(\frac{1}2, \frac{1}2) \otimes M(0, 0)$  is $1$ 
and $M$ is an $L(\frac{1}2, 0)  \otimes L(\frac{1}2, 0) \otimes M(0, 0)$-submodule of 
$X^{\langle t \rangle}$. 
Hence, $(X^{\la t\ra})_1\neq 0$ and $(\widetilde{V})_1\neq 0$. 
Since $V_1=0$, we have $\tilde{V}_1= (X^{\la t\ra})_1$ 
and hence the Lie algebra on $\tilde{V}_1$ is abelian. Thus we have the following theorem.

\begin{thm}\label{thm:A7}
The VOA $\widetilde{V}$ is isomorphic to the Leech lattice VOA $V_\Lambda$.
\end{thm}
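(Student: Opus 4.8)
The plan is to finish via the classification of holomorphic vertex operator algebras of central charge $24$ by their weight one Lie algebra, in the same way Theorem \ref{thm:V-2} was deduced from \cite[Theorem 3]{DMb}. All the required hypotheses have already been assembled in the discussion preceding the statement: $\widetilde{V}$ is a simple, rational, $C_2$-cofinite, holomorphic VOA of CFT-type of central charge $24$; its weight one space $\widetilde{V}_1 = (X^{\la t\ra})_1$ is nonzero because the conformal weight $1$ module $M = L(\tfrac12,\tfrac12)\otimes L(\tfrac12,\tfrac12)\otimes M(0,0)$ sits inside $X^{\la t\ra}$; and the bracket $[a,b]=a_{(0)}b$ makes $\widetilde{V}_1$ abelian. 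Since $C_2$-cofiniteness forces each homogeneous space to be finite dimensional, $\widetilde{V}_1$ is a finite-dimensional, nonzero, abelian Lie algebra.

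First I would record the relevant trichotomy: for a holomorphic VOA of central charge $24$, either $V_1 = 0$ and $V \cong V^\natural$, or $V_1$ is abelian of dimension $24$ and $V \cong V_\Lambda$, or the weight one Lie algebra $V_1$ has nonzero semisimple part. The hypothesis $\widetilde{V}_1 \neq 0$ rules out the first alternative, and the fact that $\widetilde{V}_1$ is abelian rules out the third. Hence $\widetilde{V} \cong V_\Lambda$, which is the assertion.

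The one point that deserves care is the passage from \emph{$\widetilde{V}_1$ is nonzero and abelian} to \emph{$\widetilde{V}_1$ is a full $24$-dimensional Cartan subalgebra}, which is what the literal statement of \cite[Theorem 3]{DMb} (as applied in Theorem \ref{thm:V-2}) requires. If one wished instead to verify $\dim \widetilde{V}_1 = 24$ by hand, the natural route is to compute $(X^{\la t\ra})_1$ from the isotypical decompositions of $X$ and $\widetilde{M}$, summing the contributions to weight $1$ of the components $V^{(i,j)}\boxtimes_{V^E}\widetilde{M}$; these are governed by the conformal weights and graded dimensions of the multiplicity spaces $M(h_1,h_2)$.

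I expect this dimension bookkeeping to be the main obstacle, since the spaces $M(h_1,h_2)$ are not pinned down directly by the hypotheses. This is precisely why the classification route is preferable: the nonexistence of a holomorphic central charge $24$ VOA with abelian weight one Lie algebra of dimension strictly between $0$ and $24$ is already built into the classification, so merely knowing that $\widetilde{V}_1$ is nonzero and abelian suffices to conclude $\widetilde{V}\cong V_\Lambda$ without any explicit count. Once this is in hand, the characterization of $V^\natural$ follows by reversing the $\Z_2$-orbifold along $t$ and using $V_1 = 0$ to single out $V^\natural$ among the two orbifolds of $V_\Lambda$.
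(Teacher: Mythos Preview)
Your proposal is correct and follows essentially the same route as the paper: the paragraph immediately preceding the theorem already establishes that $\widetilde{V}$ is holomorphic of central charge $24$ with $\widetilde{V}_1 = (X^{\la t\ra})_1$ nonzero and abelian (via the $\Z_2$-grading argument you reproduce), and the theorem is then asserted by appeal to the classification, exactly as in Theorem \ref{thm:V-2}. Your discussion of the passage from ``nonzero abelian'' to ``$24$-dimensional'' makes explicit a point the paper leaves implicit; one minor caveat is that your trichotomy overstates matters in the $V_1=0$ branch (uniqueness there is the FLM conjecture, not needed here), but this does not affect the argument.
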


Now we are ready to prove our main theorem. 

\begin{thm}\label{2ising}
The VOA $V$ is isomorphic to the Moonshine VOA $V^\natural$.
\end{thm}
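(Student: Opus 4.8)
The plan is to reverse the $\Z_2$-orbifold construction of Theorem~\ref{thm:A7}. Set $A = V^{\langle t\rangle}$, so that $V = A \oplus V^{(t=-1)}$, where $V^{(t=-1)}$ is the $(-1)$-eigenspace of $t$ on $V$, while Theorem~\ref{thm:A7} gives $\widetilde V = A \oplus X^{\langle t\rangle} \cong V_\Lambda$. Since $V$ is holomorphic and $t=\tau_e\tau_f$ has order $2$, the fixed-point subVOA $A$ is rational and $C_2$-cofinite with exactly four irreducible modules, and both $V$ and $\widetilde V$ are holomorphic simple current extensions of $A$; by Theorem~\ref{thm:EMS2015} (with $n=2$) these two extensions correspond to the two maximal isotropic subgroups of the fusion group $\Z_2 \times \Z_2$ of $A$. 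I would then introduce the grading automorphism $\widetilde t$ of the $\Z_2$-graded extension $\widetilde V = A \oplus X^{\langle t\rangle}$ that acts as $+1$ on $A$ and as $-1$ on $X^{\langle t\rangle}$; it is an involution with $\widetilde V^{\langle \widetilde t\rangle} = A$.

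Next I would identify $\widetilde t$ explicitly under $\widetilde V \cong V_\Lambda$. Because $V_1 = 0$ we have $A_1 \subseteq V_1 = 0$, so the entire weight-one space $(\widetilde V)_1 = (V_\Lambda)_1 = \h$ lies in $X^{\langle t\rangle}$, on which $\widetilde t$ acts as $-1$. Hence $\widetilde t$ is an automorphism of $V_\Lambda$ acting as $-1$ on $\h$; since inner automorphisms $e^{h(0)}$ act trivially on $\h$, the isometry underlying $\widetilde t$ is the $-1$ isometry and $\widetilde t$ is a lift of it. By \cite[Lemma 2.5]{DN1999}, exactly as in Section~\ref{sec:Z_2p-orbifold_construction}, $\widetilde t$ is then conjugate in $\Aut V_\Lambda$ to the standard lift $\theta$. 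Consequently the $\Z_2$-orbifold construction applied to $\widetilde V \cong V_\Lambda$ and $\widetilde t$ yields the same VOA as the $\Z_2$-orbifold of $V_\Lambda$ by $\theta$, namely the Moonshine VOA $V^\natural = V_\Lambda^+ \oplus V_\Lambda^{T,+}$ of \cite{FLM1988}.

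Finally I would match this orbifold with $V$ itself. Writing $O$ for the $\Z_2$-orbifold of $\widetilde V$ by $\widetilde t$, by construction $O$ is a holomorphic simple current extension of $\widetilde V^{\langle \widetilde t\rangle} = A$, so by Theorem~\ref{thm:EMS2015} it corresponds to one of the two maximal isotropic subgroups of the fusion group of $A$; that is, $O \cong V$ or $O \cong \widetilde V \cong V_\Lambda$. The previous paragraph gives $O \cong V^\natural$, whose weight-one space is $0$, whereas $(V_\Lambda)_1 = \h \neq 0$; therefore $O \not\cong V_\Lambda$, forcing $O \cong V$. Combining the two identifications yields $V \cong O \cong V^\natural$, as desired.

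The main obstacle I anticipate is the bookkeeping that makes the reversal legitimate. The delicate point is confirming that the quadratic form $q_\Delta$ on the fusion group of $A$ takes the value $1/2$ on the remaining nonzero class, so that there are exactly two maximal isotropic subgroups and $O$ is genuinely forced into $\{V, V_\Lambda\}$; together with checking that $\widetilde t$ is a bona fide order-two automorphism of $\widetilde V$ and that the orbifold of $V_\Lambda$ by a lift of $-1$ is $V^\natural$ independently of the chosen lift. These rest respectively on Theorem~\ref{thm:EMS2015}, the conjugacy argument of \cite{DN1999}, and \cite{FLM1988}, and the argument is complete once they are in place.
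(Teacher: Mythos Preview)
Your proposal is correct and follows essentially the same strategy as the paper: use Theorem~\ref{thm:A7} to identify $\widetilde V\cong V_\Lambda$, observe that the grading involution (your $\widetilde t$, the paper's $g$) acts as $-1$ on the weight-one space and hence is conjugate to the standard lift $\theta$ via \cite{DN1999}, and then conclude $V\cong V^\natural$. The only cosmetic difference is in the last step: the paper deduces $A=V^{\langle t\rangle}\cong V_\Lambda^+$ directly from the conjugacy and then uses the explicit list of four irreducible $V_\Lambda^+$-modules together with $V_1=0$ and uniqueness of simple current extensions, whereas you phrase the same dichotomy as ``the orbifold $O$ of $\widetilde V$ by $\widetilde t$ lies in $\{V,\widetilde V\}$ and equals $V^\natural$, hence $V\cong V^\natural$.'' Both arguments are the same two-extension classification of $A$, just read in opposite directions.
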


\begin{proof}
By Theorem \ref{thm:A7}, we know that the VOA $\widetilde{V}$ is
isomorphic to the Leech lattice VOA $V_\Lambda$.
Let $g$ be the automorphism of $\widetilde{V}$ which acts as $1$ on $V^{\la t\ra}$ and 
$-1$ on $X^{\la t\ra}$. 
Then $g$ is conjugate
to the lift $\theta$ of $-1$ map on $\Lambda$ since $g$ acts on $\widetilde{V}_1$ as $-1$ 
(cf. Theorem \ref{thm:v3}). 
Therefore, we have $\widetilde{V}^{\la g\ra}=V^{\la t\ra}\cong V_\Lambda^+$.
Then by the same argument as in Theorem \ref{thm:main}, we have 
\[
V\cong V_\Lambda^+ \oplus V_\Lambda^{T,+}\cong V^\natural
\]
as $V_\Lambda^+$-modules.
Then by the uniqueness of simple current extensions, 
we can establish the desired isomorphism between $V$ and $V^\natural$.
\end{proof}

\begin{rmk}
Recall that the Leech lattice $\Lambda$ contains a sublattice isometric to 
$\sqrt{2}E_8^{\oplus 3}$.
For $p=3, 5$, we can choose a fixed-point-free isometry $\tau$ of order $p$ such that 
each direct summand of $\sqrt{2}E_8^{\oplus 3}$ is stabilized; 
indeed, $\sqrt{2}E_8^{\oplus 3}$ contains $\sqrt{2}A_2^{\oplus 12}$ and 
$\sqrt{2}A_4^{\oplus 6}$ as sublattices and the fixed-point-free isometry of $\Lambda$ of 
order $3$ (resp. $5$) can be induced by the Coxeter element of $A_2$ (resp. $A_4$).  
Thus, we have $(V_{\sqrt2E_8}^{\la \tau\ra} )^{\otimes 3} \subset  V_{\Lambda}^{\la \tau\ra }$. 
Let $\theta\in\Aut V_{\sqrt2E_8}$ be a lift of the $-1$-isometry of $\sqrt2E_8$.
Then $\theta$ and $\tau$ commutes.
Since $V_{\sqrt2E_8}^{\la \theta\ra} $ has exactly $496$ Ising vectors \cite[Proposition 4.3]{LSY} 
and $496$ is relatively prime to $p$, there exists an Ising vector in $V_{\sqrt2E_8}^{\la \theta\ra}$ 
fixed by $\tau$.
Hence $V_{\Lambda}^{\la \tau\ra}$ contains two (in fact, three) mutually orthogonal Ising vectors. 
By Theorem \ref{2ising}, we have $\widetilde{V}_{\Lambda, \tau} \cong V^\natural$, also.
\end{rmk}

\paragraph{\textbf{Acknowledgment}} The authors thank Kenichiro Tanabe and Hiroshi Yamauchi for 
stimulating and valuable discussions and Masaaki Kitazume and Naoki Chigira for 
consultations about the Leech lattice.  
They also thank Scott Carnahan for pointing out a mistake in the early version.


\begin{thebibliography}{99}
\bibitem{BK2004}
B. Bakalov and V. G. Kac, 
Twisted modules over lattice vertex algebras
in \textit{Lie theory and its applications in physics} V, 3--26, World Sci.
Publ., River Edge, NJ, 2004.

\bibitem{CM2016}
S. Carnahan and M. Miyamoto, 
Regularity of fixed-point vertex operator subalgebras, arXiv:1603.05645.

\bibitem{CLS2017}
H.Y. Chen, C.H. Lam and H. Shimakura, 
$\Z_3$-orbifold construction of the Moonshine vertex
operator algebra and some maximal $3$-local subgroups of the Monster,
\textit{Math. Z.} DOI 10.1007/s00209-017-1878-z

\bibitem{ATLAS}
J. H. Conway, R. T. Curtis, S. P. Norton, R. A. Parker and R. A. Wilson, 
ATLAS of Finite Groups, Clarendon Press, Oxford, 1985.

\bibitem{DJX}
C. Dong, X. Jiao and F. Xu, Quantum dimensions and quantum Galois theory, 
\textit{Trans. Amer. Math. Soc.} \textbf{365} (2013), no.~12, 6441--6469.

\bibitem{DL1996} 
C. Dong and J. Lepowsky, 
The algebraic structure of relative twisted vertex operators, 
\textit{J. Pure, Appl. Math.} \textbf{110} (1996), 259--295. 

\bibitem{DLM2000}
C. Dong, H. Li and G. Mason, Modular-invariance of trace functions in
orbifold theory and generalized Moonshine, 
\textit{Commun. Math. Phys.} \textbf{214} (2000), 1--56.

\bibitem{DMZp}
C. Dong and G. Mason, The construction of the moonshine module as a $\Z_p$-orbifold,  
Mathematical aspects of conformal and topological field theories and quantum groups 
(South Hadley, MA, 1992), 37--52, \textit{Contemp. Math.}, \textbf{175}, 
Amer. Math. Soc., Providence, RI, 1994. 

\bibitem{DMq}
C. Dong and G. Mason, On quantum Galois theory, 
\textit{Duke Math. J.} \textbf{\bf 86} (1997), 305--321.

\bibitem{DM2004}
C. Dong and G. Mason, 
Rational vertex operator algebras and the effective central charge,
\textit{Int. Math. Res. Notices} 2004, No.56, 2989--3008.

\bibitem{DMb}
C. Dong and G. Mason, Holomorphic vertex operator algebras of small central charge, 
\textit{Pacific J. Math.} \textbf{213} (2004), 253--266.

\bibitem{DN1999}
C. Dong and K. Nagatomo, 
Automorphism groups and twisted modules for lattice vertex operator algebras, 
\textit{Contemp. Math.} \textbf{248} (1999), 117--133.

\bibitem{DRX2015}
C. Dong, L. Ren and F. Xu, 
On orbifold theory, arXiv:1507.03306v2.

\bibitem{EMS2015}
J. van Ekeren, S. M\"{o}ller and N. Scheithauer, Construction and classification of holomorphic
vertex operator algebras, arXiv:1507.08142v2.

\bibitem{FLM1988}
I. Frenkel, J. Lepowsky and A. Meurman, 
\textit{Vertex operator algebras and the Monster}, 
Pure Appl. Math. \textbf{134}, 
Academic Press, Boston, MA,1988.

\bibitem{GL2011}
R. L. Griess, Jr. and C.H. Lam, 
A moonshine path for $5A$ and associated lattices of ranks $8$ and $16$, 
\textit{J. Algebra} \textbf{331} (2011), 338--361.

\bibitem{KMi}
M. Krauel and M. Miyamoto, 
A modular invariance property of multivariable trace functions for regular vertex operator algebras, 
\textit{J. Algebra} \textbf{444} (2015), 124--142.

\bibitem{LSY} C.H. Lam, S. Sakuma and H. Yamauchi, 
Ising vectors and automorphism groups of commutant subalgebras related to root systems, 
\textit{Math. Z.} \textbf{255} (2007), no. 3, 597--626. 

\bibitem{LY1}
C.H. Lam and H. Yamauchi,
A characterization of the moonshine vertex operator algebra by means of Virasoro frames, 
\textit{Intern. Math. Res. Notices}, 2007 (2007).

\bibitem{LY}
C.H. Lam and H. Yamauchi, On the structure of framed vertex operator algebras
and  their pointwise frame stabilizers,  \textit{Comm. Math. Phys.} \textbf{277}
(2008),  237--285.

\bibitem{Lepowsky1985} 
J. Lepowsky,  
Calculus of twisted vertex operators, 
\textit{Proc. Natl. Acad. Sci. USA} \textbf{82} (1985), 8295--8299. 

\bibitem{M1}
M. Miyamoto, Griess algebras and conformal vectors in vertex operator algebras, 
\textit{J. Algebra} \textbf{179} (1996), 528--548.

\bibitem{Miyamoto2013}
M. Miyamoto, A $\Z_3$-orbifold theory of lattice vertex operator algebra and 
$\Z_3$-orbifold constructions, 
in \textit{Symmetries, Integrable Systems and Representations}, 
Springer Proceedings in Mathematics and Statistics,
\textbf{40}, 319--344, Springer, Heidelberg, 2013.

\bibitem{Miyamoto2015}
M. Miyamoto, $C_2$-cofiniteness of cyclic-orbifold models, 
\textit{Commun .Math. Phys.} \textbf{335} (2015), 1279--1286.

\bibitem{MT2004}
M. Miyamoto and K. Tanabe, 
Uniform product of $A_{g,n}(V)$ for an orbifold model $V$ and $G$-twisted Zhu algebra, 
\textit{J. Algebra} \textbf{274} (2004), 80--96.

\bibitem{Moeller2016}
S. M\"{o}ller, 
A cyclic orbifold theory for holomorphic vertex operator algebras and applications, 
arXiv:1611.09843.

\bibitem{Sh07}
H. Shimakura, Lifts of automorphisms of vertex operator algebras in
  simple current extensions, \textit{Math. Z.} \textbf{256} (2007), no.~3, 491--508.

\bibitem{TY2013}
K. Tanabe and H.  Yamada, Fixed point subalgebras of lattice vertex operator algebras 
by an automorphismof order three. 
\textit{J. Math. Soc. Japan} \textbf{65} (2013), 1169--1242.

\bibitem{Yamauchi2004}
H. Yamauchi, 
Module categories of simple current extensions of vertex operator algebras, 
\textit{J. Pure Appl. Algebra} \textbf{189} (2004), 315--328.
\end{thebibliography}
\end{document}